\newtheorem{thm}{Theorem}[section]
\newtheorem{prop}[thm]{Proposition}
\newtheorem{lem}[thm]{Lemma}
\newtheorem{cor}[thm]{Corollary}
\numberwithin{equation}{section}
\theoremstyle{definition}
\newtheorem{remark}[thm]{Remark}
\newtheorem{ex}[thm]{Example}
\DeclareMathOperator{\spec}{Spec} 
\newcommand{\iso}{\cong}
\newcommand{\niso}{\ncong}
\newcommand{\farg}{-} 
\newcommand{\id}{\mathrm{id}}
\newcommand{\dual}{^{\vee}} 
\newcommand{\comp}{\circ} 
\newcommand{\mor}[1]{\xrightarrow{#1}}
\newcommand{\isomor}{\mor{\sim}} 
\newcommand{\rest}[1]{|_{#1}} 
\newcommand{\K}{\Bbbk} 
\newcommand{\cat}[1]{{\mathbf{#1}}} 
\newcommand{\opp}{^{\circ}} 
\newcommand{\s}[1]{\mathcal{#1}} 
\newcommand{\so}{\s{O}} 
\newcommand{\sod}{\s{O}_{\Delta}} 
\newcommand{\Hom}{\mathrm{Hom}}
\newcommand{\Ext}{\mathrm{Ext}}
\newcommand{\Pic}{\mathrm{Pic}} 
\newcommand{\cone}[1]{\mathrm{C}(#1)} 
\newcommand{\sh}[2][1]{#2[#1]} 
\newcommand{\FM}[2][]{\Phi^{#1}_{#2}} 
\newcommand{\D}[1][]{\mathrm{D}^{#1}} 
\newcommand{\Db}{\D[b]} 
\newcommand{\Dp}[1][]{\cat{Perf}_{#1}} 
\newcommand{\rd}{\mathbf{R}} 
\newcommand{\ld}{\mathbf{L}} 
\newcommand{\lotimes}{\overset{\ld}{\otimes}} 
\newcommand{\ds}{\omega} 
\newcommand{\fun}[1]{\mathsf{#1}} 
\newcommand{\Coh}{\cat{Coh}} 
\newcommand{\Qcoh}{\cat{Qcoh}} 
\newcommand{\p}{\mathrm{p}} 
\newcommand{\Ker}{\s{E}} 
\newcommand{\ExFun}{\cat{ExFun}} 
\newcommand{\cal}{\mathcal}
\newcommand{\ka}{{\cal A}}
\newcommand{\kb}{{\cal B}}
\newcommand{\ke}{{\cal E}}
\newcommand{\kf}{{\cal F}}
\newcommand{\kg}{{\cal G}}
\newcommand{\ko}{{\cal O}}
\newcommand{\ZZ}{\mathbb{Z}}
\newcommand{\QQ}{\mathbb{Q}}
\newcommand{\CC}{\mathbb{C}}
\newcommand{\PP}{\mathbb{P}}
\begin{document}

	\title[Non-uniqueness of Fourier--Mukai kernels]{Non-uniqueness of Fourier--Mukai kernels}

	\author{Alberto Canonaco and Paolo Stellari}

	\address{A.C.: Dipartimento di Matematica ``F. Casorati'', Universit{\`a}
	degli Studi di Pavia, Via Ferrata 1, 27100 Pavia, Italy}
	\email{alberto.canonaco@unipv.it}

	\address{P.S.: Dipartimento di Matematica ``F.
	Enriques'', Universit{\`a} degli Studi di Milano, Via Cesare Saldini
	50, 20133 Milano, Italy}
	\email{paolo.stellari@unimi.it}

	\keywords{Derived categories, Fourier-Mukai functors}

	\subjclass[2000]{14F05, 18E25, 18E30}
	
		\begin{abstract}
			We prove that the kernels of Fourier--Mukai
                        functors are not unique in general. On the
                        other hand we show that the cohomology sheaves
                        of those kernels are unique. We also discuss
                        several properties of the functor sending an
                        object in the derived category of the product
                        of two smooth projective schemes to the
                        corresponding Fourier--Mukai functor.
		\end{abstract}

		\maketitle

	\section{Introduction}\label{Intro}

All functors that appeared so far in the geometric applications of the
theory of derived categories have a very special nature: they are
\emph{Fourier--Mukai functors}.
Recall that if $X_1$ and $X_2$ are projective schemes, an exact functor
$\fun{F}:\Dp(X_1)\to\Db(X_2)$
is of Fourier--Mukai type 
if there exists $\ke\in\Db(X_1\times X_2)$ and an isomorphism of
exact functors $\fun{F}\iso\FM{\ke}$, where, denoting by $p_i:X_1\times
X_2\to X_i$ the natural projections, $\FM{\ke}:\Dp(X_1)\to\Db(X_2)$ is
the exact functor defined by
\[
\FM{\ke}:=\rd(p_2)_*(\ke\lotimes p_1^*(-)).
\]
Such a complex $\ke$ is called a \emph{kernel} of $\fun{F}$. Recall that the category $\Dp(X_i)$ of
perfect complexes is the full triangulated subcategory of the bounded
derived category of coherent sheaves $\Db(X_i):=\Db(\Coh(X_i))$
consisting of complexes which are quasi-isomorphic to bounded
complexes of locally free sheaves of finite type over $X_i$. Notice
that $\Dp(X_i)$ coincides with $\Db(X_i)$ if and only if $X_i$ is
regular.

There are many advantages of having a functor which is described in
terms of an object in the derived category of the product. Among them is
the study of the action of those functors on cohomology leading, for
example, to a description of the group of autoequivalences of special
projective varieties (see \cite{Or1}). As Fourier--Mukai equivalences act
also on Hochschild homology and cohomology one may also study
deformations of smooth projective varieties together with deformations
of equivalences between the corresponding bounded derived categories of
coherent sheaves.

\medskip

Despite the relevance of these functors, two important and basic
questions remain open:
\medskip
\begin{itemize}
\item[(Q1)] {\it Are all exact functors between the bounded derived
  categories of coherent sheaves on smooth projective varieties of
  Fourier--Mukai type?}
\medskip
\item[(Q2)] {\it Is the kernel of a Fourier--Mukai
  functor unique (up to isomorphism)?}
\end{itemize}
\medskip
Obviously, the same questions may be reformulated more generally in terms of perfect
complexes on projective schemes.

The best evidence that the answer to both questions could be positive
is due to some beautiful results of To\"en concerning
dg-categories. Indeed, in \cite{To} it is shown that all dg
(quasi-)functors between the dg-categories of perfect complexes on
smooth proper schemes 
are of Fourier--Mukai type. This result, combined with the conjecture by Bondal, Larsen and Lunts
in \cite{BLL} saying that all exact functors between the bounded derived
categories of coherent sheaves on smooth projective varieties should
be liftable to dg (quasi-)functors between the corresponding
dg-enhancements, would answer positively (Q1).

\medskip

Contrary to the exhaustive picture for dg-categories, the results concerning derived categories are more
fragmentary and essentially provide responses to (Q1) and (Q2) under
some assumptions on the functor. In the seminal paper \cite{Or1}
(together with \cite{BB}) Orlov solved completely the case of fully faithful functors between
the bounded derived categories of coherent sheaves on smooth
projective varieties. Indeed, he proves that these functors are all of Fourier--Mukai type with unique (up to isomorphism) kernel. Various generalizations to
quotient stacks and twisted categories were given by Kawamata in \cite{Ka} and by the authors in
\cite{CS} respectively. In particular, in \cite{CS} a condition much
weaker than fully faithfulness is required for a functor to be of
Fourier--Mukai type. More recently, a new approach involving
dg-categories has been proposed by Lunts and Orlov in \cite{LO}, where they deal
with the case of fully faithful functors between the derived
categories of perfect complexes on projective schemes. This approach
allows them to avoid some of the assumptions made by Ballard in \cite{Ba1}. In
\cite{CS1}, we extend further the results in \cite{LO} and study exact functors between supported derived
categories.

\medskip

Back to the questions above, the main result in this paper shows that
the answer to (Q2) cannot be positive in general (see Section
\ref{sect:uniqueness} for the proof).

\begin{thm}\label{thm:main1}
For every elliptic curve $X$ over an algebraically closed
field there exist $\ke_1,\ke_2\in\Db(X\times X)$ such that
$\ke_1\not\iso\ke_2$ but $\FM{\ke_1}\iso\FM{\ke_2}$.
\end{thm}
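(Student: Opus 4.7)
My plan is to construct two non-isomorphic kernels with isomorphic Fourier--Mukai functors via a non-split Postnikov extension whose extension class becomes trivial at the functor level. Concretely, the strategy is to find $\ka, \kb \in \Db(X\times X)$ and a nonzero morphism $\alpha \in \Hom_{\Db(X \times X)}(\kb, \ka[n])$ for some $n \geq 1$ such that the induced natural transformation $\FM{\alpha}: \FM{\kb} \to \FM{\ka}[n]$ vanishes. Given such $\alpha$, I would set
\[
\ke_1 := \ka \oplus \sh[-n+1]{\kb}, \qquad \ke_2 := \text{Postnikov extension fitting in } \ka \to \ke_2 \to \sh[-n+1]{\kb} \xrightarrow{\alpha} \sh{\ka},
\]
so that $\ke_2 \niso \ke_1$ (since the triangle is non-split because $\alpha \neq 0$) while the induced triangle
\[
\FM{\ka} \to \FM{\ke_2} \to \sh[-n+1]{\FM{\kb}} \xrightarrow{\FM{\alpha}=0} \sh{\FM{\ka}}
\]
splits, giving $\FM{\ke_2} \iso \FM{\ka} \oplus \sh[-n+1]{\FM{\kb}} \iso \FM{\ke_1}$. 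Observe that $\ke_1$ and $\ke_2$ share the same cohomology sheaves, consistent with the uniqueness of cohomology sheaves asserted elsewhere in the paper.

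The existence of such $\alpha$ reduces to showing that the natural map
\[
\Hom_{\Db(X\times X)}(\kb, \sh[n]{\ka}) \longrightarrow \mathrm{Nat}(\FM{\kb}, \sh[n]{\FM{\ka}})
\]
fails to be injective for some choice of $\ka, \kb, n$. To locate such a pair I would compute both sides for suitable candidates: the left via the local-to-global spectral sequence $E_2^{p,q} = H^p(X\times X, \s{E}xt^q(\kb,\ka)) \Rightarrow \Ext^{p+q}(\kb,\ka)$, and the right by identifying $\FM{\ka}, \FM{\kb}$ explicitly as standard operations on $\Db(X)$ (tensor product, pullback-pushforward, derived fiber) and then applying Yoneda for representable functors plus adjointness for tensor factors. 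Natural candidates for $\ka$ and $\kb$ are structure sheaves of basic subschemes of $X\times X$ and their twists: the diagonal $\so_{\Delta}$, horizontal or vertical fibers $\so_{X\times\{e\}}$ and $\so_{\{e\}\times X}$, the structure sheaf $\so_{X\times X}$, shifts of these, and combinations twisted by degree-zero line bundles (including the Poincar\'e bundle).

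The key input from $X$ being an elliptic curve is twofold: $\omega_X \iso \so_X$ trivializes $\omega_{X\times X}$, so Serre duality gives $\Ext^2_{X\times X}(\kb, \ka) \iso \Hom_{X\times X}(\ka,\kb)^*$, producing many non-trivial classes in degree $n = 2$; and $H^1(X, \so_X) = \K$ provides the ``extra'' cohomological dimensions that contribute to $\Hom$ on the product via K\"unneth, while the corresponding space of natural transformations is controlled only by the simpler action of $\FM{\ka}, \FM{\kb}$ on $\Db(X)$. The expected discrepancy is that $\Hom_{X\times X}$ sees all K\"unneth contributions, while $\mathrm{Nat}$ collapses some of them. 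The main obstacle is pinning down the correct pair $(\ka, \kb, n)$ and carrying out the two parallel computations to exhibit an explicit non-zero element in the kernel of the map above; once this is accomplished, the construction of $\ke_1$ and $\ke_2$ follows automatically from the triangulated-category formalism.
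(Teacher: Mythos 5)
Your construction is the paper's: with $\ka=\kb=\sod$ and $n=2$, Serre duality and $\omega_{X\times X}\iso\so_{X\times X}$ give $\Hom(\sh[-1]{\sod},\sh{\sod})\iso\K$, and the two kernels are the cone of a nonzero such morphism and the direct sum $\sod\oplus\sh{\sod}$ (that these are non-isomorphic needs Lemma \ref{trivcone}, not just non-splitness of the triangle, but that is minor). The genuine gap is your final deduction: from $\FM{\alpha}=0$ you conclude that the ``induced triangle of functors'' splits, hence $\FM{\ke_2}\iso\FM{\ke_1}$. There is no such triangle: $\ExFun(\Db(X),\Db(X))$ is not known to carry a triangulated structure and $\FM[X\to X]{\farg}$ is not exact into it (indeed Corollary \ref{cor:notria} shows this can provably fail). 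What $\FM{\alpha}=0$ actually gives you is an \emph{objectwise} splitting $\FM{\ke_2}(A)\iso\FM{\ke_1}(A)$ for each $A$, i.e.\ agreement of the two functors on objects --- and that is not enough. The paper's example on $\PP^1\times\PP^1$ is a direct counterexample to your implication: there $0\ne f\in\Hom(\sh[-1]{\Delta_*\so_{\PP^1}},\sh{\Delta_*(\omega_{\PP^1}^{\otimes 2})})$ satisfies $\FM{f}=0$ (because $\Coh(\PP^1)$ is hereditary, $\FM{f}$ vanishes on every sheaf, hence on every object), yet $\FM{\cone{f}}\not\iso\FM{\Delta_*\so_{\PP^1}\oplus\sh{\Delta_*(\omega_{\PP^1}^{\otimes 2})}}$ by essential injectivity of $\FM[\PP^1\to\PP^1]{\farg}$ (Proposition \ref{essinj}). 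So your argument, run verbatim on $\PP^1$, proves a false statement; the non-injectivity of $\Hom_{\Db(X\times X)}(\kb,\sh[n]{\ka})\to\mathrm{Nat}(\FM{\kb},\sh[n]{\FM{\ka}})$ (which is essentially Proposition \ref{nonfaith}) cannot by itself yield non-uniqueness of kernels.

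The missing content --- and the actual substance of the paper's proof --- is the construction of a \emph{natural} isomorphism $\FM{\Ker_0}\isomor\FM{\Ker}$, i.e.\ correcting the objectwise splittings so that they commute with all morphisms. The paper writes $\FM{\Ker}(f)$ as a triangular matrix with off-diagonal entry a derivation $\epsilon(f)$, reduces to morphisms between indecomposable sheaves lying in a common subcategory $\cat{T}_E$ or $\cat{T}_{\p}$ (via Corollary \ref{indec}, resting on Atiyah's classification of bundles on an elliptic curve), and then kills $\epsilon$ by an inductive choice of splittings over the sheaves $F_n$ using Lemma \ref{fac}. This is where the genus-one geometry enters essentially: not only through $\omega_X\iso\so_X$, but through the structure of morphisms between indecomposables, which makes the obstruction to naturality vanish --- and which is exactly what fails for $\PP^1$.
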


More precisely, we get the following picture. Given two smooth
projective varieties $X_1$ and $X_2$, denote by
$\ExFun(\Db(X_1),\Db(X_2))$ the category of exact functors between
$\Db(X_1)$ and $\Db(X_2)$. Putting all together, we will see in
Sections \ref{sect:prel} and \ref{sect:uniqueness} that the natural
functor
\begin{equation}\label{eqn:fun}
\FM[X_1\to X_2]{\farg}\colon\Db(X_1\times X_2)\longrightarrow
\ExFun(\Db(X_1),\Db(X_2))
\end{equation}
sending $\ke$ to the functor $\FM{\ke}=\FM[X_1\to X_2]{\ke}$ is, in
general, neither essentially injective (Theorem \ref{thm:main1}) nor
faithful (see \cite[Example 6.5]{C}) nor full (Proposition
\ref{nonfaith}). Moreover we cannot even expect that
$\ExFun(\Db(X_1),\Db(X_2))$ has a triangulated structure making the
above functor exact (Corollary \ref{cor:notria}). Such a negative
picture puts the optimistic hope to answer question (Q1) positively
a bit in the shade.

\medskip

On the positive side, in Section \ref{sect:uniqcohom} we prove the
following result, which provides our best substitute for the
uniqueness of Fourier--Mukai kernels.

\begin{thm}\label{thm:main2}
Let $X_1$ and $X_2$ be projective schemes and let
$\fun{F}\colon\Dp(X_1)\to\Db(X_2)$ be an exact functor. If
$\fun{F}\iso\FM{\ke}$ for some $\ke\in\Db(X_1\times X_2)$, then the
cohomology sheaves of $\ke$ are uniquely determined (up to
isomorphism) by $\fun{F}$.
\end{thm}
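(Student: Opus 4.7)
The plan is to reconstruct each cohomology sheaf $\s{H}^i(\ke)$ from the functor $\fun{F}$ alone, by first determining it fiberwise via flat base change and then globalizing via an inductive argument on cohomological amplitude.

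First, I would exploit the flatness of $p_1$: for each closed point $x\in X_1$ the underived pullback $p_1^*k(x)$ equals the structure sheaf $\so_{\{x\}\times X_2}$, so the projection formula gives
\[
\fun{F}(k(x))\iso\rd(p_2)_*(\ke\lotimes p_1^*k(x))\iso\ld\iota_x^*\ke,
\]
where $\iota_x\colon\{x\}\times X_2\mono X_1\times X_2$ is the inclusion, and we identify $\{x\}\times X_2$ with $X_2$ under $p_2$. Thus the derived restriction of $\ke$ to every closed fiber of $p_1$ is canonically recovered from $\fun{F}$.

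Next, let $t$ be the largest index with $\s{H}^t(\ke)\neq 0$. The hyper-Tor spectral sequence
\[
E_2^{-p,q}=\mathrm{Tor}_p(\s{H}^q(\ke),\so_{\{x\}\times X_2})\Rightarrow\s{H}^{q-p}(\ld\iota_x^*\ke)
\]
collapses in top degree to give $\s{H}^t(\ld\iota_x^*\ke)\iso\s{H}^t(\ke)\otimes\so_{\{x\}\times X_2}$, since the only contributing term has $(p,q)=(0,t)$. Consequently $t$, and the fibers of $\s{H}^t(\ke)$ along $p_1$, are determined by $\fun{F}$. An analogous argument, using either $p_2$ or the right adjoint of $\fun{F}$ evaluated at skyscrapers $k(y)$ with $y\in X_2$, yields fibers of $\s{H}^t(\ke)$ along $p_2$ as well, and hence the stalks at every closed point of $X_1\times X_2$.

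To pass from fiberwise data to a globally defined coherent sheaf, I would feed $\fun{F}$ a richer supply of test objects---structure sheaves $\so_Z$ of positive-dimensional subschemes $Z\subset X_1$, twists $\so_{X_1}(-nH)$ for $n\gg 0$, or infinitesimal thickenings of points---and use the naturality of the isomorphism $\fun{F}\iso\FM{\ke}$. Properly packaged, this extra information identifies $\s{H}^t(\ke)$ as a coherent sheaf on $X_1\times X_2$, not merely its stalks. Once $\s{H}^t(\ke)$ is pinned down, the truncation triangle $\tau^{\leq t-1}\ke\to\ke\to\s{H}^t(\ke)[-t]\to\tau^{\leq t-1}\ke[1]$ and the resulting exact sequence of values $\FM{\tau^{\leq t-1}\ke}(\farg)\to\fun{F}(\farg)\to\FM{\s{H}^t(\ke)}(\farg)[-t]\to$ for each test object reduces the problem to a kernel of strictly smaller amplitude, and one iterates.

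The hard part is the globalization: a coherent sheaf on $X_1\times X_2$ is \emph{not} determined by its stalks at closed points, and Theorem~\ref{thm:main1} shows that the ambiguity in choosing a kernel for a fixed $\fun{F}$ is non-trivial. The argument must therefore be delicate enough to recover the cohomology sheaves while remaining blind to the Postnikov extension data responsible for the non-uniqueness of $\ke$ itself.
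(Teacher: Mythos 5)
Your overall skeleton---isolate the top cohomology sheaf, then induct down the truncation triangles---matches the paper's, but the two steps you leave open are exactly where the content of the proof lies, and one of your preliminary moves fails in the stated generality. The theorem concerns functors defined on $\Dp(X_1)$ for $X_1$ an arbitrary projective scheme: when $X_1$ is singular, the skyscrapers $k(x)$, the structure sheaves $\so_Z$ of subschemes and infinitesimal thickenings of points are in general \emph{not} perfect complexes, so $\fun{F}$ cannot be evaluated on them. The only test objects in your list guaranteed to lie in $\Dp(X_1)$ are the twists $\so_{X_1}(mH_1)$, and these are in fact the only ones the paper uses (it proves the stronger statement that an isomorphism of the two functors restricted to the full subcategory with objects $\{\so_{X_1}(mH_1):m>l\}$ already forces the cohomology sheaves to agree). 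Consequently the whole fiberwise/Tor--spectral--sequence discussion is not available and, as you yourself note, would not determine the sheaf anyway.

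The globalization you flag as ``the hard part'' is resolved in the paper not by stalks but by Serre's theorem \cite[Thm.\ 3.4.4]{EGA2}: a coherent sheaf $\s{F}$ on $X_1\times X_2$ is the sheaf associated to the graded module $\bigoplus_{m}(p_2)_*(\s{F}\otimes p_1^*\so_{X_1}(mH_1))$, and for $m\gg0$ the underived pushforward $(p_2)_*(\s{F}\otimes p_1^*\so_{X_1}(mH_1))$ coincides with $\FM{\s{F}}(\so_{X_1}(mH_1))$; hence a kernel which is a sheaf is recovered, up to isomorphism, from the values of its functor on high twists. This is the key lemma missing from your argument. Your inductive step also has a gap: to iterate you must show that the given isomorphism $\FM{\ke_1}\iso\FM{\ke_2}$ \emph{descends} to isomorphisms between the values of $\FM{\tau^{\le t-1}\ke_i}$ and of $\FM{H^t(\ke_i)}$ on the test objects, compatibly with the triangles; otherwise the inductive hypothesis cannot be applied to the truncated kernels. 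The paper obtains this from a $\Hom$-vanishing: for $m\gg0$ the object $\FM{\tau^{\le t-1}\ke_1}(\so_{X_1}(mH_1))$ has cohomology concentrated in degrees $\le t-1$, while $\sh[-t]{\FM{H^t(\ke_2)}(\so_{X_1}(mH_1))}$ is a sheaf placed in degree $t$, so the relevant morphism of triangles exists and is unique, yielding natural isomorphisms on high twists. Without these two ingredients the proposal remains a plan rather than a proof.
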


Notice that, as a consequence, the class in the Grothendieck group
$K(X_1\times X_2)$ of a Fourier--Mukai kernel is uniquely determined
by the functor.

\medskip

After the final version of this paper was completed, we were informed
that the example used in the proof of Theorem \ref{thm:main1} had
already been circulating among some people. As we could not find any
mention of this result in the literature, we still believe that it is
important to have it written down.

\medskip

\noindent{\bf Notation.} In the paper, $\K$ is a field and all schemes
are assumed to be over $\K$. Notice that in Sections \ref{sect:prel} and \ref{sect:uniqueness}, the field $\K$ is assumed to be algebraically closed. All additive (in particular,
triangulated) categories and all additive (in particular, exact)
functors will be assumed to be $\K$-linear. An additive category will
be called $\Hom$-finite if the $\K$-vector space $\Hom(A,B)$ is finite
dimensional for every objects $A$ and $B$. If $f\colon A\to B$ is a
morphism in a triangulated category, the \emph{cone} of $f$, denoted
by $\cone{f}$, is an object (well defined up to isomorphism) fitting
into a distinguished triangle $A\mor{f}B\to\cone{f}\to\sh{A}$.

\section{Properties of the functor $\FM[X_1\to X_2]{\farg}$}\label{sect:prel}

In this section we deal with some preliminary results concerning the
functor defined in \eqref{eqn:fun} from the derived category of the
product of two smooth projective varieties to the category of exact
functors between the corresponding derived categories of coherent
sheaves. The base field $\K$ is assumed to be algebraically closed.

\subsection{Counterexamples to faithfulness and fullness}\label{subsect:counter}
Following the notation in the introduction, if $\cat{T}_1$ and
$\cat{T}_2$ are two triangulated categories, we denote by
$\ExFun(\cat{T}_1,\cat{T}_2)$ the category whose objects are the exact
functors from $\cat{T}_1$ to $\cat{T}_2$ and whose morphisms are the
natural transformations compatible with shifts. Clearly
$\ExFun(\cat{T}_1,\cat{T}_2)$ is additive and has a natural shift
functor, but, due to the non-functoriality of the cone, it is not
known if in general it can be endowed with any triangulated structure. In particular, it is not expected to possess a natural one.

Now assume that $X_i$ for $i=1,2$ are two smooth projective varieties
of dimension $d_i$. It is easy to see that the map
$\Ker\mapsto\FM{\Ker}=\FM[X_1\to X_2]{\Ker}$ extends to the functor
\eqref{eqn:fun}, which is obviously additive and compatible with
shifts. It is natural to study properties of this functor, in
particular one can ask if it is faithful, full, essentially injective
(i.e.\ if a kernel of a Fourier--Mukai functor is unique up to
isomorphism), essentially surjective (i.e.\ if every exact functor is
of Fourier--Mukai type) or if $\ExFun(\Db(X_1),\Db(X_2))$ admits a
triangulated structure such that $\FM[X_1\to X_2]{\farg}$ is exact. We
are going to see that, at least for some choices of $X_1$ and $X_2$,
the answers to most of these questions are negative. Unfortunately we
were unable to prove anything new about essential surjectivity, which
is certainly a very intriguing problem.

\begin{remark}\label{exchange}
The functor $\FM[X_2\to X_1]{\farg}$ satisfies one of the properties
we are interested in if and only if $\FM[X_1\to X_2]{\farg}$ does:
this follows from the fact that $\FM[X_2\to X_1]{\farg}$ can be
identified with the opposite functor of $\FM[X_1\to X_2]{\farg}$ under
the equivalences $\Db(X_1\times X_2)\to\Db(X_1\times X_2)\opp$
(defined on the objects by $\Ker\mapsto\Ker\dual\otimes
p_1^*\sh[d_1]{\ds_{X_1}}$) and
$\ExFun(\Db(X_1),\Db(X_2))\to\ExFun(\Db(X_2),\Db(X_1))\opp$ (defined
on the objects by $\fun{F}\mapsto\fun{F}_*$, the right adjoint of
$\fun{F}$). Notice that we are using the fact that, in this context,
any exact functor has right and left adjoint by \cite{BB} (see also
\cite[Rmk.\ 2.1]{CS}).
\end{remark}

\begin{remark}
The functor $\FM[X_1\to X_2]{\farg}$ is an equivalence (hence it has
all the good properties we are investigating) if $d_1$ or $d_2$
is $0$. Indeed, by Remark \ref{exchange} we can assume $d_1=0$ (so
that $X_1=\spec\K$ is a point, being $\K$ algebraically closed), and then it is easy to see that a
quasi-inverse is the functor defined on objects by
$\fun{F}\mapsto\fun{F}(\K)$.
\end{remark}

So the interesting case to study is when $d_1,d_2>0$, but we can prove
something only when $d_1$ or $d_2$ is $1$. The reason for this is that
if $X$ is a smooth projective curve, then the abelian category
$\Coh(X)$ is hereditary (i.e.\ $\Ext^i(\s{F},\s{G})=0$ for every $i>1$
and for every $\s{F},\s{G}\in\Coh(X)$), which implies that every
object of $\Db(X)$ is isomorphic to the direct sum of its cohomology
sheaves. Being $X$ proper, by \cite[Thm.\ 2]{A}, the Krull--Schmidt theorem holds for the abelian category $\Coh(X)$. Namely, each object in $\Coh(X)$ can be written in a unique way (up to reordering and isomorphism) as a finite direct sum of indecomposable objects.
Moreover, being $X$ a smooth curve, every indecomposable object in $\Coh(X)$ is either a vector bundle or a torsion sheaf of the form $\so_{n\p}$ with $n$ a positive integer and $\p$ a closed point of $X$.
Since a natural transformation between additive functors is
always additive in the obvious sense, we see in particular that a
natural transformation of exact functors from $\Db(X)$ is determined
by its values on the indecomposable objects of $\Coh(X)$. This property is
essential in the proof of the following result, whose statement about
non-faithfulness is a generalization of \cite[Example 6.5]{C} (where only the particular case in which $X_1=X_2$ is an elliptic
curve is considered).

\begin{prop}\label{nonfaith}
If $\min\{d_1,d_2\}=1$, then $\FM[X_1\to X_2]{\farg}$ is neither
faithful nor full.
\end{prop}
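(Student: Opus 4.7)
By Remark~\ref{exchange} we may assume $d_1=1$, so $X:=X_1$ is a smooth projective curve and $Y:=X_2$ is arbitrary. The preparatory observation from the paragraph preceding the statement is decisive: since $\Coh(X)$ is hereditary and Krull--Schmidt holds, every $\s{G}\in\Db(X)$ is a finite direct sum of shifts of indecomposable coherent sheaves (vector bundles or $\so_{n\p}$). Consequently a natural transformation $\eta\colon\FM{\ke_1}\Rightarrow\FM{\ke_2}$ is encoded by a family $\{\eta_\s{F}\colon\FM{\ke_1}(\s{F})\to\FM{\ke_2}(\s{F})\}$ indexed by indecomposable $\s{F}\in\Coh(X)$, subject only to compatibility with morphisms between such indecomposables.

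For non-faithfulness, the plan is to exhibit a non-zero $\alpha\colon\ke_1\to\ke_2$ in $\Db(X\times Y)$ with $\FM{\alpha}(\s{F})=0$ in $\Db(Y)$ for every indecomposable $\s{F}\in\Coh(X)$; by the reduction step this forces $\FM{\alpha}=0$. The template is \cite[Example~6.5]{C}, which realises such an $\alpha$ when $X=Y$ is an elliptic curve. To transport this to a general $Y$, I would pick an embedding $\iota\colon C\hookrightarrow Y$ of a suitable smooth projective curve on which the \cite{C} construction (or a mild extension of it) is available, and push the relevant kernels and morphism forward along $\id_X\times\iota\colon X\times C\to X\times Y$. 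Projection formula and flat base change identify $\FM[X\to Y]{(\id_X\times\iota)_*(-)}$ with $\iota_*\circ\FM[X\to C]{(-)}$ on coherent sheaves, hence the fibrewise vanishing is preserved; full faithfulness of $(\id_X\times\iota)_*$ ensures $\alpha$ remains non-zero after pushforward.

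For non-fullness, the plan is a freedom count. I would choose $\ke_1,\ke_2$ for which $\Hom_{\Db(X\times Y)}(\ke_1,\ke_2)$ is explicitly computable and of moderate dimension (e.g.\ via K\"unneth), while by the reduction step the space of natural transformations $\FM{\ke_1}\Rightarrow\FM{\ke_2}$ is determined by independent data indexed by indecomposables of $\Coh(X)$. Since $\Coh(X)$ has infinitely many pairwise non-isomorphic indecomposables, and since hereditariness removes all higher-Ext compatibility data, one can prescribe $\eta_\s{F}$ on each indecomposable in a way that respects naturality on the few Hom-groups between indecomposables but is visibly not realised by any single element of the finite-dimensional kernel space $\Hom_{\Db(X\times Y)}(\ke_1,\ke_2)$.

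The main obstacle in both parts is bookkeeping across the infinite family of indecomposables of $\Coh(X)$: for non-faithfulness, verifying that the vanishing of \cite{C} is preserved on every indecomposable after pushforward to $X\times Y$; for non-fullness, constructing a compatible family $\{\eta_\s{F}\}$ lying outside the image of the kernel Hom-space. Both tasks are made effective by the explicit classification of indecomposable coherent sheaves on a smooth projective curve and by hereditariness, which reduces all compatibility conditions to the Hom level and kills higher-Ext glueing data.
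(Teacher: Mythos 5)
Your reduction to $d_1=1$ and your use of the decomposition of objects of $\Db(X_1)$ into shifts of indecomposable sheaves match the paper. The non-fullness half is the paper's strategy in outline, but it is missing its two load-bearing ingredients. The paper makes your ``freedom count'' concrete by taking $\fun{F}:=g^*\comp f_*$ for finite morphisms $f\colon X_1\to\PP^{d_2}$ and $g\colon X_2\to\PP^{d_2}$ (so that $\fun{F}\iso\FM{\Ker}$ and $\fun{F}$ sends sheaves to sheaves), and by exhibiting, for each closed point $\p\in X_1$, a natural transformation $\zeta_{\p}\colon\id\to\sh{}$ that is nonzero on $\so_{\p}$ and zero on every other indecomposable; checking that this prescription is actually natural against the morphisms between the sheaves $\so_{n\p}$ is precisely the content of Lemma \ref{fac}, which your sketch gestures at (``respects naturality on the few Hom-groups'') but does not carry out. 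Composing with $\fun{F}$ then gives infinitely many independent elements of $\Hom_{\ExFun(\Db(X_1),\Db(X_2))}(\FM{\Ker},\FM{\sh{\Ker}})$, against the finite-dimensional $\Hom(\Ker,\sh{\Ker})$.

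The non-faithfulness half has a genuine gap. Pushing forward the example of \cite[Example 6.5]{C} along $\id_{X_1}\times\iota$ requires a copy of $X_1$ (or of a curve carrying an analogous example) embedded in $X_2$, and such an embedding need not exist (take $X_1$ an elliptic curve and $X_2=\PP^1$, or $X_2$ a non-isogenous elliptic curve). Moreover the ``mild extension'' to other curves is not available: the \cite{C} example is a nonzero class in $\Ext^2(\sod,\sod)$, and for $X_1=\PP^1$ one computes $\Ext^2(\sod,\sod)=0$ on $\PP^1\times\PP^1$, so no endomorphism-type example exists there --- yet the proposition must hold for $X_1=\PP^1$, since that case feeds Corollary \ref{cor:notria}. (Also, $(\id_{X}\times\iota)_*$ is not fully faithful for a proper closed immersion; even faithfulness would need an argument.) The missing idea is the paper's Serre-duality trick between two \emph{different} kernels: since $\Ker\ne0$, one has $\Hom(\Ker,\Ker\otimes\sh[1+d_2]{\ds_{X_1\times X_2}})\iso\Hom(\Ker,\Ker)\dual\ne0$, and any nonzero $\alpha$ there is killed by $\FM[X_1\to X_2]{\farg}$ because $\fun{F}$ takes sheaves to sheaves and $\Ext^{1+d_2}$ between coherent sheaves on the $d_2$-dimensional variety $X_2$ vanishes. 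This works uniformly for every smooth projective curve $X_1$ and every $X_2$, with no auxiliary embedding.
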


\begin{proof}
By Remark \ref{exchange} we can assume that $1=d_1\le d_2$. Choose a
finite morphism $f\colon X_1\to\PP^{d_2}$ and a finite and surjective
(hence flat) morphism $g\colon X_2\to\PP^{d_2}$. Then
$\fun{F}:=g^*\comp f_*\colon\Coh(X_1)\to\Coh(X_2)$ is an exact
functor, which trivially extends to an exact functor again denoted by
$\fun{F}\colon\Db(X_1)\to\Db(X_2)$. Clearly there exists
$0\niso\Ker\in\Db(X_1\times X_2)$ such that $\fun{F}\iso\FM{\Ker}$.

In order to prove that $\FM[X_1\to X_2]{\farg}$ is not faithful,
notice that, by Serre duality,
\[\Hom_{\Db(X_1\times X_2)}(\Ker,\Ker)\iso\Hom_{\Db(X_1\times X_2)}
(\Ker,\Ker\otimes\sh[1+d_2]{\ds_{X_1\times X_2}})\dual,\]
so there exists $0\ne\alpha\in\Hom_{\Db(X_1\times X_2)}
(\Ker,\Ker\otimes\sh[1+d_2]{\ds_{X_1\times X_2}})$. Since
$\ds_{X_1\times X_2}\iso p_1^*\ds_{X_1}\otimes p_2^*\ds_{X_2}$, this induces for
any $\s{F}\in\Coh(X_1)$ a morphism
\[\FM{\alpha}(\s{F})\colon\FM{\Ker}(\s{F})\iso\fun{F}(\s{F})\to
\FM{\Ker\otimes\sh[1+d_2]{\ds_{X_1\times X_2}}}(\s{F})\iso
\fun{F}(\s{F}\otimes\ds_{X_1})\otimes\sh[1+d_2]{\ds_{X_2}}.\]
As $\fun{F}(\s{F})$ and $\fun{F}(\s{F}\otimes\ds_{X_1})$ are objects
of $\Coh(X_2)$, it follows that $\FM{\alpha}(\s{F})=0$, whence
$\FM{\alpha}=0$.

Now we are going to show that $\FM[X_1\to X_2]{\farg}$ is not full. We
start by observing that for every closed point $\p\in X_1$ we can define
a natural transformation $\zeta_{\p}\colon\id\to\sh$ of exact functors
on $\Db(X_1)$ by setting $\zeta_{\p}(\s{F}):=0$ for every
indecomposable object of $\Coh(X_1)$ not isomorphic to $\so_{\p}$ and
taking $\zeta_{\p}(\so_{\p})\ne0$ (note that the latter is an element
of $\Hom(\so_{\p},\sh{\so_{\p}})\iso\Hom(\so_{\p},\so_{\p})\dual\iso\K$
by Serre duality), and then extending additively and by shifts in the
obvious way. It is easy to see that in this way $\zeta_{\p}$ is really
a natural transformation, namely that
$\sh{\phi}\comp\zeta_{\p}(\s{F})=\zeta_{\p}(\s{G})\comp\phi$ for
every morphism $\phi\colon\s{F}\to\s{G}$ in $\Db(X)$: indeed, it is
enough to assume that $\s{F},\s{G}\in\Coh(X)$ are
indecomposable, in which case the required equality follows from
Lemma \ref{fac} below if $\s{F}$ and $\s{G}$ are supported at $\p$,
and is otherwise trivial. (Indeed, we use Lemma \ref{fac} identifying $F_n$ with $\so_{n\p}$, as it is explained in the paragraph before the lemma.)

Composing with $\fun{F}$ clearly
defines a natural transformation from $\fun{F}\iso\FM{\Ker}$ to
$\sh{\fun{F}}\iso\FM{\sh{\Ker}}$, hence an element $\zeta'_{\p}\in
\Hom_{\ExFun(\Db(X_1),\Db(X_2))}(\FM{\Ker},\FM{\sh{\Ker}})$. It is not
difficult to see that $\zeta'_{\p}(\so_{\p})\ne0$, which implies that
\[\dim_{\K}\Hom_{\ExFun(\Db(X_1),\Db(X_2))}
(\FM{\Ker},\FM{\sh{\Ker}})=\infty,\]
thereby proving that $\FM[X_1\to X_2]{\farg}$ is not full.
\end{proof}

\subsection{Projective line}\label{subsect:PP^1}

We start by proving the uniqueness (up to isomorphism) of Fourier--Mukai kernels for the projective line. This has to be compared with
the more interesting case of elliptic curves (Section
\ref{sect:uniqueness}).

\begin{prop}\label{essinj}
If $X_1$ or $X_2$ is $\PP^1$, then $\FM[X_1\to X_2]{\farg}$ is
essentially injective.
\end{prop}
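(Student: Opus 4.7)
The plan is to reduce via Remark \ref{exchange} to the case $X_1=\PP^1$, and then reconstruct any $\ke\in\Db(\PP^1\times X_2)$, up to isomorphism, from the functor $\FM{\ke}$ using Beilinson's resolution of the diagonal on $\PP^1\times\PP^1$,
\[
0 \to \so(-1)\boxtimes\so(-1) \to \so\boxtimes\so \to \sod \to 0.
\]
Denoting the projections from $\PP^1\times\PP^1\times X_2$ by $\pi_{ij}$, I would pull this resolution back along $\pi_{12}$, tensor with $\pi_{23}^*\ke$, and push forward along $\pi_{13}$. Combining flat base change and the projection formula with the identifications $\rd(p_2)_*(\ke\otimes p_1^*\so(i))\iso\FM{\ke}(\so(i))$, this produces a distinguished triangle in $\Db(\PP^1\times X_2)$:
\[
p_1^*\so(-1)\otimes p_2^*\FM{\ke}(\so(-1))\mor{d_\ke} p_1^*\so\otimes p_2^*\FM{\ke}(\so)\to\ke\to p_1^*\so(-1)\otimes p_2^*\FM{\ke}(\so(-1))[1].
\]
This exhibits $\ke$ as the cone of $d_\ke$, so reconstructing $\ke$ from $\FM{\ke}$ amounts to reconstructing the morphism $d_\ke$.

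The central observation is that, via adjunction and the projection formula, $d_\ke$ is canonically identified with the image under $\FM{\ke}$ of the Euler morphism $e\colon\so(-1)\to W\otimes\so$ on $\PP^1$, where $W:=H^0(\PP^1,\so(1))$. Concretely, applying $\rd(p_2)_*(-\otimes p_1^*\so(1))$ to the above Beilinson triangle yields a distinguished triangle in $\Db(X_2)$,
\[
\FM{\ke}(\so(-1))\to W\otimes\FM{\ke}(\so)\to\FM{\ke}(\so(1)),
\]
and one checks that this agrees with the image under the exact functor $\FM{\ke}$ of the Euler triangle $\so(-1)\to W\otimes\so\to\so(1)$ on $\PP^1$. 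Combined with the adjunction isomorphism
\[
\Hom\bigl(p_1^*\so(-1)\otimes p_2^*\FM{\ke}(\so(-1)),\,p_1^*\so\otimes p_2^*\FM{\ke}(\so)\bigr) \iso \Hom_{X_2}\bigl(\FM{\ke}(\so(-1)),\,W\otimes\FM{\ke}(\so)\bigr),
\]
this identifies $d_\ke$ with $\FM{\ke}(e)$; in the universal case $\ke=\sod$, the identification is nothing but the statement that the Beilinson differential corresponds under adjunction to the Euler morphism itself.

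Given then an isomorphism $\eta\colon\FM{\ke_1}\isomor\FM{\ke_2}$ of exact functors, the naturality of $\eta$ applied to the morphism $e$ gives
\[
(W\otimes\eta(\so))\comp\FM{\ke_1}(e)=\FM{\ke_2}(e)\comp\eta(\so(-1)).
\]
By the identification above, this means exactly that $\eta(\so(-1))$ and $\eta(\so)$ intertwine $d_{\ke_1}$ with $d_{\ke_2}$, hence induce an isomorphism between the Beilinson triangles of $\ke_1$ and $\ke_2$ which extends, via the axioms of triangulated categories, to an isomorphism $\ke_1\iso\ke_2$.

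The main technical obstacle I foresee is the identification $d_\ke=\FM{\ke}(e)$: although the argument via the $\rd(p_2)_*$-pushforward is natural, making it fully rigorous requires a careful check that the second and third terms of the pushed Beilinson triangle canonically coincide, as a triangle, with the result of applying $\FM{\ke}$ to the Euler triangle. Once this coherence is settled, essential injectivity is an immediate consequence of the naturality of any isomorphism of functors.
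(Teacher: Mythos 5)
Your proof is correct and takes essentially the same route as the paper: after reducing to $X_1=\PP^1$ via Remark \ref{exchange}, the paper convolves the resolution $0\to\so(-1,-1)\mor{x_0\boxtimes x_1-x_1\boxtimes x_0}\so\to\sod\to0$ with $\ke$ (citing \cite[Sect.\ 4.3]{CS}) to realize $\ke$ as the cone of $x_0\boxtimes\fun{F}(x_1)-x_1\boxtimes\fun{F}(x_0)$, which is exactly your morphism $d_\ke$ identified with the image of the Euler morphism. The coherence issue you flag at the end is precisely what the reference to \cite{CS} is invoked for, so there is no gap.
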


\begin{proof}
As usual, by Remark \ref{exchange} we can assume that
$X_1=\PP^1$. Since on $\PP^1\times\PP^1$ there is a resolution of the
diagonal of the form
\[0\to\so(-1,-1)\mor{x_0\boxtimes x_1-x_1\boxtimes x_0}\so\to\sod\to0,\]
the argument in \cite[Sect.\ 4.3]{CS} shows that, for every exact
functor $\fun{F}\colon\Db(\PP^1)\to\Db(X_2)$, any object $\Ker$ in
$\Db(\PP^1\times X_2)$ such that $\fun{F}\iso\FM{\Ker}$ is necessarily
a convolution of the complex
\[\so(-1)\boxtimes\fun{F}(\so(-1))
\mor{\varphi:=x_0\boxtimes\fun{F}(x_1)-x_1\boxtimes\fun{F}(x_0)}
\so\boxtimes\fun{F}(\so),\]
hence it is uniquely determined up to isomorphism as the cone of
$\varphi$.
\end{proof}

We conclude this section showing that, in some cases, the category
$\ExFun(\Db(X_1),\Db(X_2))$ cannot have a suitable triangulated
structure. For this we need some preliminary results.

\begin{lem}\label{trivcone}
Let $\cat{T}$ be a $\Hom$-finite triangulated category and let $f\colon
A\to B$ be a morphism of $\cat{T}$. Then $\cone{f}\iso\sh{A}\oplus
B$ if and only if $f=0$.
\end{lem}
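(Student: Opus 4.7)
The ``if'' direction is straightforward: when $f=0$, the candidate triangle $A\mor{0}B\to\sh{A}\oplus B\to\sh{A}$, obtained as the direct sum of the trivial triangle $0\to B\mor{\id}B\to 0$ and the rotation $A\to 0\to\sh{A}\mor{\id}\sh{A}$, is distinguished and hence isomorphic to $A\mor{f}B\to\cone{f}\to\sh{A}$, forcing $\cone{f}\iso\sh{A}\oplus B$.

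For the nontrivial direction, the plan is to exploit $\Hom$-finiteness via a dimension count. Denote $C:=\cone{f}$ and apply the contravariant functor $\Hom(A,\farg)$ to the distinguished triangle, yielding the long exact sequence
\[
\Hom(A,A)\mor{f_*}\Hom(A,B)\to\Hom(A,C)\to\Hom(A,\sh{A})\mor{(\sh{f})_*}\Hom(A,\sh{B}).
\]
Exactness lets one compute $\dim_\K\Hom(A,C)$ in terms of the other four finite-dimensional $\K$-vector spaces and the images of $f_*$ and $(\sh{f})_*$. Specifically, exactness at $\Hom(A,C)$ combined with exactness at the two neighbouring terms yields
\[
\dim\Hom(A,C)=\dim\Hom(A,B)-\dim\im(f_*)+\dim\Hom(A,\sh{A})-\dim\im((\sh{f})_*).
\]

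Now the hypothesis $C\iso\sh{A}\oplus B$ gives $\dim\Hom(A,C)=\dim\Hom(A,\sh{A})+\dim\Hom(A,B)$, and comparing with the identity above forces $\dim\im(f_*)=\dim\im((\sh{f})_*)=0$. In particular $f=f_*(\id_A)=0$, as desired. There is no real obstacle here: the only thing to watch out for is that the $\Hom$-finite assumption is genuinely needed, since the argument is a cancellation of finite dimensions on both sides, and that the long exact sequence must be invoked far enough in both directions to control the kernel and cokernel of $f_*$.
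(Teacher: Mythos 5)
Your argument is correct and is essentially the paper's: the paper applies the dual functor $\Hom(\farg,B)$ to the triangle and notes that, for dimension reasons in the resulting four-term exact sequence of finite-dimensional spaces, the map $\Hom(B,B)\to\Hom(A,B)$ given by precomposition with $f$ must vanish --- a slightly shorter version of your dimension count with $\Hom(A,\farg)$. (Minor slip only: $\Hom(A,\farg)$ is the covariant, not the contravariant, Hom functor.)
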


\begin{proof}
The other implication being well-known, we assume that
$\cone{f}\iso\sh{A}\oplus B$. Applying the cohomological functor
$\Hom(\farg,B)$ to the distinguished triangle $A\mor{f}B\to
\sh{A}\oplus B\to\sh{A}$, one gets an exact sequence of finite
dimensional $\K$-vector spaces
\[\Hom(\sh{A},B)\to\Hom(\sh{A}\oplus B,B)\to\Hom(B,B)
\mor{(\farg)\comp f}\Hom(A,B).\]
For dimension reasons, the last map must be $0$, hence $f=0$.
\end{proof}

\begin{lem}\label{injfaith}
Let $\fun{F}\colon\cat{T}\to\cat{T}'$ be an exact functor between
triangulated categories, and assume that $\cat{T}$ is
$\Hom$-finite. If $\fun{F}$ is essentially injective, then $\fun{F}$
is faithful, too.
\end{lem}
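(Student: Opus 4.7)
The plan is to reduce faithfulness of $\fun{F}$ to the characterization of zero morphisms via their cones provided by Lemma \ref{trivcone}. Since $\cat{T}$ is $\K$-linear, $\fun{F}$ is faithful if and only if $\fun{F}(f)=0$ implies $f=0$ for every morphism $f\colon A\to B$ in $\cat{T}$. So I would start with such an $f$ satisfying $\fun{F}(f)=0$ and aim to show $f=0$.

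The key observation is that $\fun{F}$ being exact means it sends the distinguished triangle $A\mor{f}B\to\cone{f}\to\sh{A}$ to a distinguished triangle realizing $\fun{F}(\cone{f})$ as a cone of $\fun{F}(f)$. Since $\fun{F}(f)=0$, the well-known easy direction of Lemma \ref{trivcone} yields an isomorphism $\fun{F}(\cone{f})\iso\sh{\fun{F}(A)}\oplus\fun{F}(B)$. Because $\fun{F}$ is additive and commutes with shifts, the right-hand side is isomorphic to $\fun{F}(\sh{A}\oplus B)$, giving $\fun{F}(\cone{f})\iso\fun{F}(\sh{A}\oplus B)$.

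At this point the essential injectivity of $\fun{F}$ kicks in: from the isomorphism above we deduce $\cone{f}\iso\sh{A}\oplus B$ in $\cat{T}$ itself. Now, since $\cat{T}$ is $\Hom$-finite, the nontrivial direction of Lemma \ref{trivcone} applies and forces $f=0$, as required. The main point—indeed the only delicate one—is the use of essential injectivity to transport the splitting of the cone from $\cat{T}'$ back to $\cat{T}$, which avoids any $\Hom$-finiteness assumption on $\cat{T}'$. The rest is formal manipulation with exact functors and cones.
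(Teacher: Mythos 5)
Your proposal is correct and follows exactly the paper's own argument: apply $\fun{F}$ to the triangle of $f$, use $\fun{F}(f)=0$ to split $\fun{F}(\cone{f})$, pull the splitting back via essential injectivity, and conclude with Lemma \ref{trivcone}. Nothing to add.
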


\begin{proof}
Let $f\colon A\to B$ be a morphism of $\cat{T}$ such that
$\fun{F}(f)=0$. Then
\[\fun{F}(\cone{f})\iso\cone{\fun{F}(f)}\iso
\sh{\fun{F}(A)}\oplus\fun{F}(B)\iso\fun{F}(\sh{A}\oplus B)\] in
$\cat{T}'$, whence $\cone{f}\iso\sh{A}\oplus B$ in $\cat{T}$ because
$\fun{F}$ is essentially injective. It follows from Lemma
\ref{trivcone} that $f=0$.
\end{proof}

\begin{cor}\label{cor:notria}
If $d_1,d_2>0$ and $X_1$ or $X_2$ is $\PP^1$, then
$\ExFun(\Db(X_1),\Db(X_2))$ does not admit a triangulated structure
such that $\FM[X_1\to X_2]{\farg}$ is exact.
\end{cor}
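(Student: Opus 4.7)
The plan is to combine the three preceding results in this section: Proposition \ref{essinj} (essential injectivity when $\PP^1$ is one of the factors), Proposition \ref{nonfaith} (failure of faithfulness when $\min\{d_1,d_2\}=1$), and Lemma \ref{injfaith} (essential injectivity implies faithfulness for exact functors out of a $\Hom$-finite triangulated category). Suppose for contradiction that $\ExFun(\Db(X_1),\Db(X_2))$ admits a triangulated structure making $\FM[X_1\to X_2]{\farg}$ exact.

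First I would verify that the hypotheses of Lemma \ref{injfaith} are satisfied. The source category $\Db(X_1\times X_2)$ is $\Hom$-finite, since $X_1\times X_2$ is a smooth projective variety over $\K$ (the $\Hom$'s in $\Db$ of a smooth projective variety are finite-dimensional $\K$-vector spaces). Proposition \ref{essinj} applies because $X_1$ or $X_2$ equals $\PP^1$, giving essential injectivity of $\FM[X_1\to X_2]{\farg}$. Under the standing assumption that it is also exact, Lemma \ref{injfaith} then forces $\FM[X_1\to X_2]{\farg}$ to be faithful.

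On the other hand, the hypothesis $d_1,d_2>0$ together with one of the $X_i$ being $\PP^1$ gives $\min\{d_1,d_2\}=1$, so Proposition \ref{nonfaith} provides the opposite conclusion: $\FM[X_1\to X_2]{\farg}$ is not faithful. This contradiction yields the corollary. There is no real obstacle here: the work has been done in the preceding statements, and this corollary is a one-line synthesis of them.
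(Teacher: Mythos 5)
Your proof is correct and follows exactly the paper's own argument: assume exactness, invoke Proposition \ref{essinj} and Lemma \ref{injfaith} to deduce faithfulness (noting that $\Db(X_1\times X_2)$ is $\Hom$-finite since $X_1\times X_2$ is smooth projective), and contradict Proposition \ref{nonfaith}. Nothing to add.
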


\begin{proof}
This follows from Lemma \ref{injfaith}, since we know that in this case
$\FM[X_1\to X_2]{\farg}$ is essentially injective by Proposition
\ref{essinj}, but not faithful by Proposition \ref{nonfaith}.
\end{proof}

\section{Elliptic curves and non-uniqueness}\label{sect:uniqueness}

In this section we provide the proof of Theorem \ref{thm:main1}, so we
assume that $\K$ is algebraically closed and that $X$ is an elliptic
curve. Up to replacing $\Db(X)$ with an equivalent category, we can
assume that there is exactly one object in every isomorphism class,
and more precisely, as explained in Section \ref{subsect:counter}, that every object is a finite direct sum of shifts
of coherent sheaves and that every object of $\Coh(X)$ is uniquely
(up to reordering) a finite direct sum of indecomposable
sheaves. Recall that the indecomposable objects of $\Coh(X)$ are
either vector bundles or torsion sheaves of the form $\so_{n\p}$ with
$n>0$ and $\p$ a closed point of $X$. The following result summarizes some properties of
indecomposable vector bundles over an elliptic curve.

\begin{prop}{\bf(\cite{T})}\label{Prop:T}
For every $r>0$ and $d\in\ZZ$ there is an indecomposable vector bundle
$E_{r,d}$ of rank $r$ and degree $d$ on $X$ such that:
\begin{itemize}
\item[\rm (i)] All indecomposable vector bundles of rank $r$ and
degree $d$ are those of the form $E_{r,d}\otimes L$ with
$L\in\Pic^0(X)$, and they are all distinct;
\item[\rm (ii)] If $k>0$,then $F_k:=E_{k,0}$ is the only
indecomposable vector bundle of rank $k$ and degree $0$ having global
sections (in particular, $F_1=\so_X$), and if $k>1$ there is an exact
sequence
\begin{equation}\label{Fseq}
0\to F_1\to F_k\to F_{k-1}\to0;
\end{equation}
\item[\rm (iii)] If $n=\gcd(r,d)$, $E_{r,d}=E_{r/n,d/n}\otimes F_n$;
\item[\rm (iv)] $E_{r,d}$ (hence also $E_{r,d}\otimes L$ for every
$L\in\Pic^0(X)$) is semistable, and it is stable if and only if
$\gcd(r,d)=1$.
\end{itemize}
\end{prop}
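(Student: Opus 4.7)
This is Atiyah's classical classification of indecomposable bundles on an elliptic curve, so the plan is to reproduce its skeleton. The key inputs I would use repeatedly are that $\ds_X \iso \so_X$, so Serre duality reads $\Ext^1(\s{F}, \s{G}) \iso \Hom(\s{G}, \s{F})\dual$; Riemann--Roch gives $\chi(\s{F}) = \deg \s{F}$ for every coherent sheaf; and $\Coh(X)$ is hereditary, so $\Ext^i$ vanishes for $i \geq 2$.

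First, for (ii) I would build the $F_k$ by induction on $k$. Set $F_1 := \so_X$. Assuming $F_{k-1}$ has been produced as an indecomposable bundle of rank $k-1$, degree $0$, with a nonzero section, Serre duality yields
\[ \Ext^1(F_{k-1}, \so_X) \iso H^0(F_{k-1})\dual \neq 0, \]
so a nontrivial extension defines $F_k$, which plainly has rank $k$, degree $0$, and a section. Indecomposability reduces to checking that $\End(F_k)$ is local: inductively one shows $\End(F_{k-1}) \iso \K$, and then a short diagram chase using nontriviality of the extension forces $\End(F_k) \iso \K$. For uniqueness, given any indecomposable $F$ of rank $k$, degree $0$ with a section, Serre duality also gives $\Hom(F, \so_X) \neq 0$; I would extract from $F$ a subsheaf $\so_X$ with indecomposable quotient of rank $k-1$, degree $0$, and sections (ruling out split or reducible quotients using indecomposability of $F$), apply induction to identify that quotient with $F_{k-1}$, and note that nonsplitness is automatic.

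Next, for (iii) I would take it as a definition when $n := \gcd(r, d) > 1$, putting $E_{r,d} := E_{r/n, d/n} \otimes F_n$, which has rank $r$ and degree $d$ because $F_n$ has rank $n$ and trivial determinant. The real content of (i) then lies in the coprime case, which I would handle by induction on $r$ using autoequivalences of $\Db(X)$ that permute the pairs $(r,d)$: tensoring with a degree-$1$ line bundle sends $(r,d) \mapsto (r, d+r)$, while the Fourier--Mukai autoequivalence with the Poincar\'e kernel sends $(r,d) \mapsto (-d, r)$. Being equivalences, they preserve indecomposability and respect the $\Pic^0(X)$-action, and together they generate an $SL_2(\ZZ)$-action that reduces any coprime pair to $(1, d')$, where indecomposables of the given rank and degree are precisely the line bundles of that degree, forming a $\Pic^0(X)$-torsor.

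Finally, for (iv), semistability follows because the $SL_2(\ZZ)$-reduction sends $E_{r,d}$ to a line bundle, which is trivially semistable, and semistability is preserved by these equivalences; alternatively, a destabilizing subobject would produce a nonzero morphism to an indecomposable of higher slope, contradicting (i). Stability when $\gcd(r,d) = 1$ follows because a proper subbundle of the same slope $d/r$ must have rank-degree proportional to $(r, d)$, forcing rank $\geq r$, a contradiction. The main obstacle I expect is the coprime case of (i): one must verify that the Fourier--Mukai with Poincar\'e kernel really does act on ranks and degrees by the claimed formula (a Chern character computation) and that its composition with the shear from $\Pic^1(X)$ implements the Euclidean algorithm on $(r,d)$ compatibly with the indexing $E_{r,d}$ for which (iii) is asserted.
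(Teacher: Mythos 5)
This proposition is not proved in the paper at all: it is quoted, with attribution, from Tu's article \cite{T} (ultimately Atiyah's classification), so there is no internal argument to measure your sketch against. On its own terms your outline follows the standard route --- inductive construction of the Atiyah bundles $F_k$ as successive self-extensions of $\so_X$, reduction of the coprime case of (i) via the action on $(\rk,\deg)$ generated by twisting by a degree-one line bundle and by the Poincar\'e Fourier--Mukai transform, and semistability by transport of structure --- and that route can be made to work. But it contains one step that is concretely false.

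You claim that indecomposability of $F_k$ is established by showing $\End(F_k)\iso\K$, asserting inductively that $\End(F_{k-1})\iso\K$. This fails already for $k=2$: the composite $F_2\epi\so_X\mono F_2$ built from the defining extension is a nonzero nilpotent endomorphism, and in fact $\End(F_k)\iso\K[x]/(x^k)$ has dimension $k$ --- precisely the statement $\dim_{\K}\Hom(F_m,F_n)=\min\{m,n\}$ recorded in the paper just before Lemma \ref{fac} and used throughout Section \ref{sect:uniqueness}. The diagram chase you propose therefore cannot close; the correct criterion is that $\End(F_k)$ is a \emph{local} ring with nilpotent maximal ideal, and proving that requires showing that every endomorphism of $F_k$ is a scalar plus a nilpotent factoring through the sub- and quotient copies of $F_{k-1}$ (essentially the content of Lemma \ref{fac}). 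Beyond this, the sketch leaves the usual delicate points unaddressed: the uniqueness half of (ii) needs $h^0(F_k)=1$ and needs the quotient $F/\so_X$ to be shown indecomposable with a section (neither is automatic from indecomposability of $F$ alone), and the reduction in (i) and (iv) requires verifying --- as you yourself flag --- that the Poincar\'e transform sends indecomposable sheaves to shifted indecomposable sheaves with the transformed rank and degree, and that the resulting indexing is the one for which (iii) holds.
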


\begin{cor}\label{indec}
Let $E_i$ (for $i=1,2$) be indecomposable vector bundles of rank $r_i$
and degree $d_i$ on $X$ with the property that
$\Hom(E_1,E_2)\ne0\ne\Hom(E_2,E_1)$. Then, setting
$n_i:=\gcd(r_i,d_i)$, there exists a stable vector bundle $E$ of rank
$r_1/n_1=r_2/n_2$ and degree $d_1/n_1=d_2/n_2$ such that $E_i=E\otimes
F_{n_i}$ for $i=1,2$.
\end{cor}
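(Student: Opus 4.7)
The plan is to show first that the hypotheses force the slopes of $E_1$ and $E_2$ to coincide, and then to reconstruct both bundles from a common stable factor. Since $E_1$ and $E_2$ are semistable by Proposition \ref{Prop:T}(iv), having nonzero morphisms in both directions forces $\mu(E_1)=\mu(E_2)$, i.e.\ $d_1/r_1 = d_2/r_2$. Writing this common fraction in lowest terms as $d/r$ with $\gcd(r,d)=1$, we obtain $r_i = n_i r$ and $d_i = n_i d$ for $i=1,2$.

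Next, Proposition \ref{Prop:T}(i) gives $E_i\iso E_{r_i,d_i}\otimes L_i$ for some $L_i\in\Pic^0(X)$, and part (iii) gives $E_{r_i,d_i}\iso E_{r,d}\otimes F_{n_i}$. Setting $E^{(i)}:=E_{r,d}\otimes L_i$, which is stable of rank $r$ and degree $d$ by (iv) (since $\gcd(r,d)=1$), we have $E_i\iso E^{(i)}\otimes F_{n_i}$. Hence the corollary reduces to the claim that $E^{(1)}\iso E^{(2)}$.

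To prove this, I would iterate the short exact sequence \eqref{Fseq} to obtain, by induction on $n$, a filtration of $F_n$ by subbundles whose successive quotients are all isomorphic to $\so_X$; tensoring with $E^{(i)}$ then yields a filtration of $E_i\iso E^{(i)}\otimes F_{n_i}$ whose successive quotients are all isomorphic to the stable bundle $E^{(i)}$, which is therefore a Jordan--H\"older filtration. The conclusion then follows from the well-known fact that two semistable sheaves of the same slope on a smooth projective curve admit no nonzero morphism as soon as their sets of Jordan--H\"older factors are disjoint: indeed, the image of any nonzero morphism between semistables of equal slope is again semistable of the same slope, and, being both a quotient of the source and a subsheaf of the target, its Jordan--H\"older factors must appear on both sides. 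Since $\Hom(E_1,E_2)\ne 0$ by hypothesis, the unique (up to isomorphism) JH factors $E^{(1)}$ and $E^{(2)}$ of the two bundles must coincide, giving $E^{(1)}\iso E^{(2)}$.

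The main obstacle is this final step, but once the filtrations coming from \eqref{Fseq} are in place it is a standard application of semistability; everything before it is simply an unpacking of Proposition \ref{Prop:T}.
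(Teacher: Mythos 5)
Your proposal is correct, and its first two steps (slope equality from semistability, then $E_i\iso E^{(i)}\otimes F_{n_i}$ with $E^{(i)}=E_{r,d}\otimes L_i$ stable, via parts (i), (iii), (iv) of Proposition \ref{Prop:T}) coincide exactly with the paper's. Where you diverge is the crux, namely showing $E^{(1)}\iso E^{(2)}$ (equivalently $L_1=L_2$). The paper argues by contraposition and induction on $n_1+n_2$: assuming $L_1\ne L_2$, it applies $\Hom(E_1,E_{r,d}\otimes L_2\otimes\farg)$ to the sequence \eqref{Fseq} and sandwiches $\Hom(E_1,E_2)$ between two groups that vanish by the inductive hypothesis, the base case being that distinct stable bundles of the same slope admit no nonzero morphisms. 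You instead build the Jordan--H\"older filtration of $E_i$ explicitly by iterating \eqref{Fseq} (so that all JH factors of $E_i$ are $E^{(i)}$) and invoke the standard fact that a nonzero morphism between semistable sheaves of equal slope has image semistable of that slope, whose JH factors occur in both source and target. Both arguments are sound; the paper's is more self-contained, using nothing beyond Proposition \ref{Prop:T} and the base case about stable bundles, while yours imports the Jordan--H\"older theory for the abelian category of semistable sheaves of fixed slope, which is a well-known external input. In exchange, your argument is more conceptual and immediately yields the stronger general statement that semistable sheaves of equal slope with disjoint JH factors admit no nonzero morphisms, of which the corollary is a special case. One small point worth making explicit if you write this up: the filtration of $F_n$ with successive quotients $\so_X$ comes from iterating the subobject $F_1\hookrightarrow F_k$ in \eqref{Fseq} and pulling back a filtration of $F_{k-1}$, and tensoring by the locally free $E^{(i)}$ preserves its exactness.
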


\begin{proof}
The hypothesis, together with the fact that $E_1$ and $E_2$ are
semistable, implies that $d_1/r_1=d_2/r_2$, from which it is immediate
to deduce that $r_1/n_1=r_2/n_2$ and $d_1/n_1=d_2/n_2$. Set $r:=r_1/n_1$ and $d:=d_1/n_1$. As
$E_i=E_{r_i,d_i}\otimes L_i$ for some $L_i\in\Pic^0(X)$, we get
$E_i=E_{r,d}\otimes L_i\otimes F_{n_i}$ for $i=1,2$ (see parts (i) and (iii) of Proposition \ref{Prop:T}). It remains to
prove that $L_1=L_2$, because then we can conclude setting
$E:=E_{r,d}\otimes L_i$ (which, due to part (iv) of Proposition \ref{Prop:T}, is stable since
$\gcd(r,d)=1$). Assuming instead that $L_1\ne L_2$, we will reach a
contradiction by showing that $\Hom(E_1,E_2)=0$. We proceed by
induction on $n_1+n_2$: the case $n_1=n_2=1$ follows from the fact
$E_i=E_{r,d}\otimes L_i$ for $i=1,2$ are distinct stable vector
bundles (see parts (i) and (iv) of Proposition \ref{Prop:T}). As for the inductive step, we suppose $n_2>1$ (the case
$n_1>1$ is similar) and apply the functor $\Hom(E_1,E_{r,d}\otimes
L_2\otimes\farg)$ to \eqref{Fseq} with $k=n_2$. This yields an exact
sequence
\[\Hom(E_1,E_{r,d}\otimes L_2\otimes F_1)\to\Hom(E_1,E_{r,d}\otimes L_2\otimes F_{n_2})\to
\Hom(E_1,E_{r,d}\otimes L_2\otimes F_{n_2-1}).\] By induction, the first and the third terms in the sequence are $0$, whence
the second one is $0$ as well. But $E_{r,d}\otimes L_2\otimes F_{n_2}=E_2$ and this provides the desired contradiction.
\end{proof}

We will denote by $\cat{T}_E$ for $E$ a stable vector bundle on $X$
(respectively $\cat{T}_{\p}$ for $\p$ a closed point of $X$) the full
triangulated subcategory of $\Db(X)$ classically generated by $E$
(respectively $\so_{\p}$), namely the smallest strictly full triangulated subcategory of $\Db(X)$ containing $E$
(respectively $\so_{\p}$) and closed under direct summands. Since
$\rd\Hom(E,E)\iso\rd\Hom(\so_{\p},\so_{\p})\iso\K\oplus\sh{\K}$ (so
that $E$ and $\so_{\p}$ are $1$-spherical objects), it follows from
\cite[Thm.\ 2.1]{KYZ} that these categories are all equivalent; it
is also clear that the indecomposable sheaves of $\cat{T}_E$
(respectively $\cat{T}_{\p}$) are $E\otimes F_n$ (respectively
$\so_{n\p}$) for $n>0$. In the following we will also denote by
$\cat{T}$ any of the equivalent categories $\cat{T}_E$ or
$\cat{T}_{\p}$, but for simplicity of notation we will identify it
with $\cat{T}_{\so_X}$.  As $\cat{T}$ is equivalent to a (derived)
category of $\K[x]$--modules, $F_n$ corresponding to $\K[x]/(x^n)$
(this is perhaps easier to see regarding $\cat{T}$ as $\cat{T}_{\p}$),
it is clear that $\dim_{\K}\Hom(F_m,F_n)=\min\{m,n\}$, for $m,n>0$, and
there are (non-split) distinguished triangles in $\cat{T}$ (the second
one is induced by \eqref{Fseq} with $k=n+1$)
\begin{gather}\label{tp1}
F_n\mor{\pi'_{n+1,1}}F_{n+1}\mor{\pi_{n+1,1}}F_1
\mor{\pi''_{n+1,1}}\sh{F_n} \\
\label{tpn}
F_1\mor{\pi'_{n+1,n}}F_{n+1}\mor{\pi_{n+1,n}}F_n
\mor{\pi''_{n+1,n}}\sh{F_1}
\end{gather}
where $\pi_{m,n}\colon F_m\to F_n$ for $m>n$ denotes the
natural projection.

\begin{lem}\label{fac}
If $0<m\le n$, every morphism $F_m\to F_{n+1}$ factors through
$\pi'_{n+1,1}$ and every morphism $F_{n+1}\to F_m$ factors through
$\pi_{n+1,n}$. Moreover, every morphism $F_{n+1}\to F_{n+1}$ is
uniquely the sum of $\lambda\id$ for some $\lambda\in\K$ and of a
morphism which factors through $\pi'_{n+1,1}$ and $\pi_{n+1,n}$.
\end{lem}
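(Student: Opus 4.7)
My plan is to transport everything to the equivalent category of finite-dimensional $\K[x]$-modules recalled just before the lemma, in which $F_n$ corresponds to $R_n:=\K[x]/(x^n)$. Under this identification the short exact sequences giving the triangles \eqref{tp1} and \eqref{tpn} become
\[
0\to R_n\mor{\cdot x}R_{n+1}\to R_1\to 0 \quad\text{and}\quad 0\to R_1\mor{\cdot x^n}R_{n+1}\to R_n\to 0,
\]
so $\pi'_{n+1,1}$ becomes multiplication by $x$ while $\pi_{n+1,n}$ becomes the canonical projection. After this translation, the three statements reduce to elementary linear-algebra claims about $\K[x]/(x^k)$.

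For the first factorization, a morphism $f\colon R_m\to R_{n+1}$ is determined by $f(1)\in R_{n+1}$, which must satisfy $x^m f(1)=0$, and therefore lies in $x^{n+1-m}R_{n+1}\subseteq xR_{n+1}=\im(\pi'_{n+1,1})$ thanks to $m\le n$. Symmetrically, a morphism $g\colon R_{n+1}\to R_m$ is determined by $g(1)\in R_m$; since $m\le n$, the assignment $1\mapsto g(1)$ defines a morphism $h\colon R_n\to R_m$, and $g=h\comp\pi_{n+1,n}$ by construction. For the last assertion I would write $\phi(1)\in R_{n+1}$ uniquely as $\lambda+xb$ with $\lambda\in\K$ and $b\in R_n$, and exhibit
\[
\phi-\lambda\id\colon R_{n+1}\mor{\pi_{n+1,n}}R_n\mor{\cdot b}R_n\mor{\pi'_{n+1,1}}R_{n+1},
\]
so that $\phi-\lambda\id$ factors through both $\pi_{n+1,n}$ and $\pi'_{n+1,1}$ simultaneously. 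Uniqueness of $\lambda$ follows by post-composing with $\pi_{n+1,1}$: any morphism factoring through $\pi'_{n+1,1}$ is killed by $\pi_{n+1,1}\comp(\farg)$ since $\pi_{n+1,1}\comp\pi'_{n+1,1}=0$ by \eqref{tp1}, whereas $\pi_{n+1,1}\comp(\lambda\id)=\lambda\pi_{n+1,1}$ is nonzero for $\lambda\ne 0$.

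I do not foresee any substantive obstacle, since once the module description is invoked every step is a short inspection. The only mildly delicate point is the last part, where one must exhibit a single chain of arrows that witnesses the two required one-sided factorizations at the same time; I handle this by inserting multiplication by $b$ between $\pi_{n+1,n}$ and $\pi'_{n+1,1}$, which makes both factorizations visible at once.
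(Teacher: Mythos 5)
Your proof is correct, but it runs along a different track from the paper's. The paper only uses the module description $F_k\leftrightarrow\K[x]/(x^k)$ to record the dimension formula $\dim_\K\Hom(F_m,F_n)=\min\{m,n\}$, and then argues structurally: applying $\Hom(F_m,-)$ to the triangle \eqref{tp1} shows that $\pi'_{n+1,1}\comp(-)\colon\Hom(F_m,F_n)\to\Hom(F_m,F_{n+1})$ is injective, hence bijective for $m\le n$ by equality of dimensions, while for $m=n+1$ its image is an $n$-dimensional subspace not containing $\id$, so that $\K\,\id$ and this image span $\Hom(F_{n+1},F_{n+1})$; the statements for $\pi_{n+1,n}$ are obtained symmetrically from \eqref{tpn}. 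You instead chase elements of $\K[x]/(x^k)$ throughout, which is more explicit and equally valid (the identification of $\pi'_{n+1,1}$ with multiplication by $x$ and of $\pi_{n+1,n}$ with the canonical quotient is correct up to harmless units, and the set of morphisms factoring through a given map is unaffected by such rescalings). Your approach has one genuine advantage in the ``Moreover'' part: by exhibiting the single chain $\pi'_{n+1,1}\comp(\cdot\,b)\comp\pi_{n+1,n}$ you make it manifest that one and the same complement of $\lambda\id$ factors through $\pi'_{n+1,1}$ and $\pi_{n+1,n}$ simultaneously, a point the paper's dimension count (which a priori produces two hyperplane complements of $\K\,\id$, one for each factorization) leaves implicit. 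One small step you could make fully explicit: in the first factorization, knowing $f(1)\in\im(\pi'_{n+1,1})$ gives an element $h(1)$ of $R_n$, and the fact that $1\mapsto h(1)$ is a well-defined morphism on $R_m$ (i.e.\ $x^m h(1)=0$) follows from the injectivity of $\pi'_{n+1,1}$, since $\pi'_{n+1,1}(x^m h(1))=x^m f(1)=0$. This is automatic but worth a clause.
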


\begin{proof}
By \eqref{tp1} the map $\Hom(F_m,F_n)
\mor{\pi'_{n+1,1}\comp(-)}\Hom(F_m,F_{n+1})$ is injective for
every $m>0$. In particular, if $m\le n$ the map is also surjective
because both spaces have dimension $m$, whereas if $m=n+1$ the first
space has dimension $n$, the second $n+1$ and clearly $\id$ is not in
the image of the map. This proves both statements involving
$\pi'_{n+1,1}$, and those involving $\pi_{n+1,n}$ can be proved in a
similar way using \eqref{tpn} instead of \eqref{tp1}.
\end{proof}

Since $\Hom(\sh[-1]{\sod},\sh{\sod})\iso\Hom(\sod,\sod)\dual\iso\K$ by
Serre duality, an object $\Ker$ of $\Db(X\times X)$ obtained as the
cone of a nonzero morphism $\sh[-1]{\sod}\to\sh{\sod}$ is well defined
up to isomorphism. Notice that such a morphism is the same as the one considered in \cite[Example 6.5]{C}. Setting also $\Ker_0:=\sod\oplus\sh{\sod}$, we have
$\Ker\niso\Ker_0$ by Lemma \ref{trivcone}. We are going to prove
that
\[\FM{\Ker}\iso\FM{\Ker_0}\colon\Db(X)\to\Db(X).\]
To this purpose, we start by observing that $\FM{\Ker_0}$ is just
$\id\oplus\sh{}$ and that $\FM{\Ker}$ coincides with $\FM{\Ker_0}$ on
objects. As explained in the following example, in general, this is not enough to conclude that the two functors are isomorphic.

\begin{ex}
An easy calculation shows that, on $\PP^1\times\PP^1$, there is an isomorphism of $\K$-vector spaces
$\Hom(\sh[-1]{\Delta_*\ko_{\PP^1}},\sh{\Delta_*(\omega_{\PP^1}^{\otimes 2})})\iso\K$.
Take $0\neq f\in\Hom(\sh[-1]{\Delta_*\ko_{\PP^1}},\sh{\Delta_*(\omega_{\PP^1}^{\otimes 2})})$ and consider the objects $\kf_0:=\Delta_*\ko_{\PP^1}\oplus\sh{\Delta_*(\omega_{\PP^1}^{\otimes 2})}$ and $\kf:=\cone{f}$ in $\Db(\PP^1\times\PP^1)$. Obviously $\FM{\kf_0}$ and $\FM{\kf}$ coincide on objects because $\kg\oplus\sh{(\kg\otimes\omega_{\PP^1}^{\otimes 2})}\iso\FM{\kf_0}(\kg)\iso\FM{\kf}(\kg)$, for every $\kg\in\Db(\PP^1)$. On the other hand $\kf_0\not\iso\kf$ (use again Lemma \ref{trivcone}) and so, by Proposition \ref{essinj}, the functors $\FM{\kf_0}$ and $\FM{\kf}$ are not isomorphic.
\end{ex}

Back to the genus $1$ case, to prove that $\FM{\Ker}\iso\FM{\Ker_0}$ we have to take care of
morphisms as well. To this end observe that $\FM{\Ker}$ is defined on every morphism $f\colon \ka\to\kb$ of
$\Db(X)$ by
\[\fun{\FM{\Ker}}(f)=\begin{pmatrix}
f & 0 \\
\epsilon(f) & \sh{f}
\end{pmatrix}\colon\ka\oplus\sh{\ka}\to \kb\oplus\sh{\kb}\]
for some $\epsilon(f)\colon \ka\to\sh{\kb}$. Notice that $\epsilon$ is
$\K$--linear in the obvious sense (because $\FM{\Ker}$ is
$\K$--linear), $\epsilon(\id_\ka)=0$ for every object $\ka$ of $\Db(X)$
(because $\fun{\FM{\Ker}}(\id_\ka)=\id_{\FM{\Ker}(\ka)}$) and
$\epsilon(g\comp f)=\epsilon(g)\comp f+\sh{g}\comp\epsilon(f)$ for
every pair of composable morphisms $f$ and $g$ of $\Db(X)$ (because
$\fun{\FM{\Ker}}(g\comp f)=\fun{\FM{\Ker}}(g)\comp\FM{\Ker}(f)$). It
is also evident that if $\ka$
and $\kb$ are sheaves and $f\in\Hom(\ka,\sh{\kb})$, then $\epsilon(f)=0$.

\begin{prop}
With the above notation, there is an isomorphism
$\FM{\Ker}\iso\FM{\Ker_0}$.
\end{prop}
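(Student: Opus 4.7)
The plan is to construct, for each object $\ka\in\Db(X)$, a morphism $\delta_\ka\colon\ka\to\sh{\ka}$ so that the assignment
\[
\eta(\ka):=\begin{pmatrix}\id_\ka & 0\\ \delta_\ka & \id_{\sh{\ka}}\end{pmatrix}\colon \ka\oplus\sh{\ka}\isomor\ka\oplus\sh{\ka}
\]
defines a natural isomorphism $\FM{\Ker_0}\iso\FM{\Ker}$. Using the matrix form of $\FM{\Ker}(f)$ recalled above, the naturality square for $\eta$ unwinds to the cocycle condition
\[
\epsilon(f)=\delta_\kb\comp f-\sh{f}\comp\delta_\ka\qquad(\ast)
\]
for every morphism $f\colon\ka\to\kb$ in $\Db(X)$; producing $\delta$ that satisfies $(\ast)$ proves the proposition.

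First I would reduce $(\ast)$ to a manageable class of morphisms. Since $\Coh(X)$ is hereditary and Krull--Schmidt, every object of $\Db(X)$ is uniquely a finite direct sum of shifts of indecomposable sheaves; I define $\delta$ diagonally on direct sums and set $\delta_{\sh[k]{\kf}}:=\sh[k]{\delta_\kf}$ on shifts. This is consistent with the $\K$-linearity of $\epsilon$ and with the shift compatibility $\epsilon(\sh{f})=\sh{\epsilon(f)}$, which follows from the exactness of $\FM{\Ker}$ once the splittings $\FM{\Ker}(\ka)\iso\ka\oplus\sh{\ka}$ are chosen shift-compatibly. Hence it is enough to define $\delta$ on indecomposable sheaves and verify $(\ast)$ only for morphisms $f\colon\kf\to\sh[l]{\kg}$ between indecomposable shifted sheaves; by hereditariness this is non-trivial only for $l\in\{0,1\}$. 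The case $l=1$ uses that $\epsilon$ vanishes on maps from a sheaf to a shifted sheaf, so $(\ast)$ becomes the naturality requirement $\sh{\delta_\kg}\comp f=\sh{f}\comp\delta_\kf$; the case $l=0$ gives a sheaf morphism $\phi\colon\kf\to\kg$ and the non-trivial cocycle equation $\epsilon(\phi)=\delta_\kg\comp\phi-\sh{\phi}\comp\delta_\kf$.

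The heart of the construction uses the tube decomposition. Every indecomposable sheaf has the form $E\otimes F_n$ for a stable bundle $E$ (lying in $\cat{T}_E$) or $\so_{n\p}$ for a closed point $\p$ (lying in $\cat{T}_\p$); by Corollary \ref{indec}, two indecomposables with nonzero Homs in both directions lie in the same such tube. Within each tube, identified with $\cat{T}_{\so_X}$ so that the $n$th indecomposable corresponds to $F_n$, I define $\delta_{F_n}$ inductively in $n$, starting from $\delta_{F_1}:=0$. For the inductive step, Lemma \ref{fac} says that every morphism between $F_m$ and $F_n$ factors through $\pi'_{n+1,1}$ or $\pi_{n+1,n}$, so by the derivation property of $\epsilon$ all of $\epsilon|_{\cat{T}}$ is determined by its values on these basic morphisms; the two cocycle equations for $\pi'_{n+1,1}$ and $\pi_{n+1,n}$ pin down $\delta_{F_{n+1}}$, and their mutual compatibility follows by applying the derivation rule to the composition $\pi_{n+1,n}\comp\pi'_{n+1,1}$ and invoking Lemma \ref{fac} once more.

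The main obstacle will be gluing the tube-by-tube constructions: morphisms between indecomposables in different tubes still exist (in a single direction only, by semistability) and produce further instances of $(\ast)$ relating $\delta$-values on inequivalent indecomposables. Finding a coherent system of seeds $\delta_E\in\Ext^1(E,E)$ and $\delta_{\so_\p}\in\Ext^1(\so_\p,\so_\p)$ so that all these one-sided relations hold simultaneously is the delicate point. Since the Serre functor on $\Db(X)$ is $[1]$ itself (because $\ds_X\iso\so_X$ on an elliptic curve) and $\Ker$ was defined as the cone of a distinguished element of the one-dimensional space $\Hom(\sh[-1]{\sod},\sh{\sod})$, the Serre-duality pairings in play are particularly clean; I expect a uniform choice such as $\delta_E=0$ and $\delta_{\so_\p}=0$ for every seed, built on top of the inductive tube construction, to force the required identities, with the detailed verification of these inter-tube relations being where the real work lies.
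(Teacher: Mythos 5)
Your overall strategy --- realizing $\eta(\ka)$ as a unipotent lower-triangular matrix with off-diagonal entry $\delta_\ka$, reducing naturality to the cocycle equation $(\ast)$ on indecomposable sheaves, and solving $(\ast)$ tube by tube via Lemma \ref{fac} and the derivation property of $\epsilon$ --- is the same as the paper's. However, the proof is not complete precisely where you say ``the real work lies'': you leave the inter-tube compatibilities as an expectation, and in fact you have misdiagnosed the difficulty. The point you are missing is that the equation $\epsilon(f)=\delta_\kb\comp f-\sh{f}\comp\delta_\ka$ lives in $\Hom(\ka,\sh{\kb})$, so it is vacuous whenever $\Hom(\ka,\kb)=0$ (no nonzero $f$ to test) or $\Hom(\ka,\sh{\kb})=0$ (the ambient space vanishes). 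Since $\ds_X\iso\so_X$, Serre duality gives $\Hom(\ka,\sh{\kb})\iso\Hom(\kb,\ka)\dual$, so $(\ast)$ is a genuine constraint only for indecomposables with $\Hom(\ka,\kb)\ne0\ne\Hom(\kb,\ka)$; by Corollary \ref{indec} (for bundles, and trivially in the remaining cases) such $\ka,\kb$ lie in one and the same tube $\cat{T}_E$ or $\cat{T}_\p$. Hence there are no inter-tube relations to glue at all: the ``one-sidedness'' of morphisms between different tubes is exactly the condition that kills the target $\Hom$-space. The same remark disposes of your $l=1$ case, since that equation lives in $\Ext^2=0$ by hereditariness.

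Within a single tube your sketch points in the right direction but skates over the two steps that carry the actual content. First, the existence of $\delta_{F_{n+1}}$ solving $\epsilon(\pi'_{n+1,1})=\delta_{F_{n+1}}\comp\pi'_{n+1,1}$ is not automatic: one must check that $\epsilon(\pi'_{n+1,1})\comp\sh[-1]{\pi''_{n+1,1}}=0$, which holds because it lies in $\Hom(\sh[-1]{F_1},\sh{F_{n+1}})\iso\Ext^2(F_1,F_{n+1})=0$. Second, compatibility with the second equation $\epsilon(\pi_{n+1,n})=-\sh{\pi_{n+1,n}}\comp\delta_{F_{n+1}}$ does not follow from the derivation rule alone: the derivation rule only shows that the discrepancy $\gamma$ satisfies $\gamma\comp\pi'_{n+1,1}=0$ (and even this requires having arranged, as the paper does by conjugating the functor at each inductive step, that $\epsilon$ already vanishes on the subcategory generated by $F_1,\dots,F_n$); one must then deduce that $\gamma$ factors through $\pi_{n+1,1}$, use Serre duality to see that $\Hom(F_1,\sh{F_n})\iso\K$ is spanned by $\pi''_{n+1,1}$, and conclude that $\gamma$ is a multiple of $\pi''_{n+1,1}\comp\pi_{n+1,1}=0$. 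With these two points supplied and the inter-tube issue dissolved as above, your construction does yield the proposition.
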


\begin{proof}
We will show that there is an isomorphism
$\eta\colon\FM{\Ker_0}\isomor\FM{\Ker}$ such that
\[\eta(\ka)=\begin{pmatrix}
\id & 0 \\
\beta(\ka) & \id
\end{pmatrix}\colon \ka\oplus\sh{\ka}\to \ka\oplus\sh{\ka}\]
for every object $\ka$ of $\Db(X)$. It is clearly enough to define
$\eta(\ka)$ for every indecomposable sheaf $\ka$ (and then extend
additively and by shifts in the obvious way) so that
$\FM{\Ker}(f)\comp\eta(\ka)=\eta(\kb)\comp\FM{\Ker_0}(f)$ for every
morphism of indecomposable sheaves $f\colon \ka\to \kb$. Now, this
equality is equivalent to $\epsilon(f)=\beta(\kb)\comp
f-\sh{f}\comp\beta(\ka)$, so it is certainly satisfied if either
$\Hom(\ka,\kb)=0$ or $\Hom(\ka,\sh{\kb})=0$. On the other hand, if the
indecomposable sheaves $\ka$ and $\kb$ are such that
$\Hom(\ka,\kb)\ne0\ne\Hom(\ka,\sh{\kb})$, then $\ka$ and $\kb$ belong
to the same subcategory $\cat{T}_E$ ($E$ a vector bundle) or
$\cat{T}_{\p}$ ($\p$ a closed point). This follows from Corollary
\ref{indec} if both $\ka$ and $\kb$ are vector bundles (taking into
account that $\Hom(\ka,\sh{\kb})\iso\Hom(\kb,\ka)\dual$ by Serre
duality), whereas it is trivial in the other cases.

So, setting $\fun{F}:=\FM{\Ker}\rest{\cat{T}}$ and
$\fun{F_0}=\FM{\Ker_0}\rest{\cat{T}}$, it is enough to prove that
there is an isomorphism $\eta\colon\fun{F}_0\to\fun{F}$ of the above
form. In order to do that, we are going to define inductively for
every $n>0$ (exact) functors $\fun{F}_n\colon\cat{T}\to\cat{T}$ and
morphisms of $\cat{T}$
\[\alpha_n=\begin{pmatrix}
\id & 0 \\
\beta_n & \id
\end{pmatrix}
\colon F_n\oplus\sh{F_n}\to F_n\oplus\sh{F_n}\]
with the following properties:
\begin{itemize}
\item[(a)] $\fun{F}_1=\fun{F}$ and $\alpha_1=\id$;
\item[(b)] for every $n>0$ the functor $\fun{F}_n$ coincides with
$\fun{F}_0$ on objects, $\fun{F}_n(f)=\begin{pmatrix}
f & 0 \\
\epsilon_n(f) & \sh{f}
\end{pmatrix}$ for every morphism $f$ of $\cat{T}$ and
$\fun{F}_n\rest{\cat{T}_n}=\fun{F}_0\rest{\cat{T}_n}$,
where $\cat{T}_n$ denotes the full additive and closed under shifts
(but not triangulated) subcategory of $\cat{T}$ generated by $F_i$
for $0<i\le n$;
\item[(c)] for every $n>1$ the morphisms $\eta_n(F_m)\colon
F_m\oplus\sh{F_m}\to F_m\oplus\sh{F_m}$ (for
$m>0$) defined by $\eta_n(F_n)=\alpha_n$ and
$\eta_n(F_m)=\id$ if $m\ne n$, extend to an isomorphism
$\eta_n\colon\fun{F}_n\isomor\fun{F}_{n-1}$.
\end{itemize}
Once this is done, it is then straightforward to check that the
morphisms $\eta(F_n):=\alpha_n$ (for $n>0$) extend to an
isomorphism $\eta\colon\fun{F}_0\isomor\fun{F}$ as wanted.

In order to perform the inductive step from $n$ to $n+1$, notice that
for an arbitrary choice of $\beta_{n+1}$ (hence of $\alpha_{n+1}$ and
of $\eta_{n+1}$) and setting
$\fun{F}_{n+1}(f):=\eta_{n+1}(B)^{-1}\comp\fun{F}_n(f)\comp\eta_{n+1}(A)$
for every morphism $f\colon A\to B$ of $\cat{T}$, all the required
properties are satisfied, except possibly
$\fun{F}_{n+1}\rest{\cat{T}_{n+1}}=\fun{F}_0\rest{\cat{T}_{n+1}}$. Since
$\fun{F}_{n+1}\rest{\cat{T}_n}=\fun{F}_n\rest{\cat{T}_n}$ by
construction and $\fun{F}_n\rest{\cat{T}_n}=\fun{F}_0\rest{\cat{T}_n}$
by the inductive hypothesis, in view of Lemma \ref{fac} this last
condition holds if and only if the diagram
\[\xymatrix{\fun{F}(F_n) \ar[rr]^-{\fun{F}_0(\pi'_{n+1,1})}
\ar[drr]_-{\fun{F}_n(\pi'_{n+1,1})\;\;\;} & & \fun{F}(F_{n+1})
\ar[d]^{\alpha_{n+1}} \ar[rr]^-{\fun{F}_0(\pi_{n+1,n})} & &
\fun{F}(F_n) \\
 & & \fun{F}(F_{n+1}) \ar[urr]_-{\;\;\;\fun{F}_n(\pi_{n+1,n})}
}\]
commutes. Clearly this is true if and only if
\begin{gather}\label{ep1}
\epsilon_n(\pi'_{n+1,1})=\beta_{n+1}\comp\pi'_{n+1,1}, \\
\label{epn}
\epsilon_n(\pi_{n+1,n})=-\sh{\pi_{n+1,n}}\comp\beta_{n+1}.
\end{gather}
As $\epsilon_n(\pi'_{n+1,1})\comp\sh[-1]{\pi''_{n+1,1}}\in
\Hom(\sh[-1]{F_1},\sh{F_{n+1}})=0$, from the distinguished
triangle \eqref{tp1} we deduce that there exists
$\beta_{n+1}\colon F_{n+1}\to\sh{F_{n+1}}$ such that
\eqref{ep1} is satisfied, and we claim that then \eqref{epn} is
automatically true, namely that $\gamma:=\epsilon_n(\pi_{n+1,n})+
\sh{\pi_{n+1,n}}\comp\beta_{n+1}=0$. Indeed, using \eqref{ep1} and the
fact that $\epsilon_n\rest{\cat{T}_n}=0$,
\begin{multline*}
\gamma\comp\pi'_{n+1,1}=\epsilon_n(\pi_{n+1,n})\comp\pi'_{n+1,1}
+\sh{\pi_{n+1,n}}\comp\beta_{n+1}\comp\pi'_{n+1,1} \\
=\epsilon_n(\pi_{n+1,n})\comp\pi'_{n+1,1}
+\sh{\pi_{n+1,n}}\comp\epsilon_n(\pi'_{n+1,1})=
\epsilon_n(\pi_{n+1,n}\comp\pi'_{n+1,1})=0.
\end{multline*}
It follows again from \eqref{tp1} that
$\gamma=\gamma'\comp\pi_{n+1,1}$ for some
$\gamma'\colon F_1\to\sh{F_n}$. Now, by Serre duality,
$\Hom(F_1,\sh{F_n})\iso\Hom(F_n,F_1)\dual\iso\K$,
so there exists $\lambda\in\K$ such that
$\gamma'=\lambda\pi''_{n+1,1}$, whence
$\gamma=\lambda\pi''_{n+1,1}\comp\pi_{n+1,1}=0$.
\end{proof}

\begin{cor}
For every elliptic curve $X$ the functor $\FM[X\to X]{\farg}$ is not
essentially injective.
\end{cor}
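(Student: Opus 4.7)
The plan is to simply package the construction carried out just before the corollary into the required counterexample. Recall that the preceding discussion produced two specific objects of $\Db(X\times X)$: the object $\Ker_0:=\sod\oplus\sh{\sod}$, and the object $\Ker$ defined as the cone of a nonzero morphism $\sh[-1]{\sod}\to\sh{\sod}$ (such a morphism exists, and is unique up to scalar, because $\Hom(\sh[-1]{\sod},\sh{\sod})\iso\Hom(\sod,\sod)\dual\iso\K$ by Serre duality).

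First, I would observe that $\Ker\niso\Ker_0$. This is immediate from Lemma \ref{trivcone} applied in the $\Hom$-finite triangulated category $\Db(X\times X)$: if $\Ker$ were isomorphic to $\sh{\sh[-1]{\sod}}\oplus\sh{\sod}\iso\Ker_0$, then the morphism used to form the cone would have to vanish, contradicting our choice. Next, I would invoke the Proposition immediately preceding the corollary, which produces an explicit isomorphism of exact functors $\FM{\Ker_0}\isomor\FM{\Ker}$ from $\Db(X)$ to $\Db(X)$.

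Combining these two facts yields a pair of non-isomorphic objects $\Ker,\Ker_0\in\Db(X\times X)$ with $\FM{\Ker}\iso\FM{\Ker_0}$ in $\ExFun(\Db(X),\Db(X))$, which by definition means that $\FM[X\to X]{\farg}$ fails to be essentially injective. There is no real obstacle here, since the substance of the argument (both the non-isomorphism of kernels and the isomorphism of the associated functors) has already been established; the corollary is just the translation of the preceding Proposition into the language of the functor $\FM[X\to X]{\farg}$, and it simultaneously yields Theorem \ref{thm:main1}.
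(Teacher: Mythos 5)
Your proposal is correct and follows exactly the paper's (implicit) argument: the corollary is stated as an immediate consequence of the preceding Proposition together with the earlier observation that $\Ker\niso\Ker_0$ via Lemma \ref{trivcone}. The identification $\sh{(\sh[-1]{\sod})}\oplus\sh{\sod}\iso\sod\oplus\sh{\sod}=\Ker_0$ and the applicability of Lemma \ref{trivcone} to the $\Hom$-finite category $\Db(X\times X)$ are both handled correctly.
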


\section{The uniqueness of the cohomology sheaves}\label{sect:uniqcohom}

In this section we prove Theorem \ref{thm:main2}, hence we assume that
$X_1$ and $X_2$ are projective schemes with ample divisors $H_1$ and $H_2$ on $X_1$ and $X_2$ respectively.
For $l\in\ZZ$, denote by $\cat{C}_l$ the full
subcategory with objects $\{\ko_{X_1}(mH_1):m>l\}\subset\Coh(X_1)$. Consider
Fourier--Mukai functors
\[
\FM{\ke_1},\FM{\ke_2}:\Dp(X_1)\longrightarrow\Db(X_2)
\]
where $\ke_1,\ke_2\in\D(\Qcoh(X_1\times X_2))$, and such that there exists an isomorphism
\begin{equation}\label{eqn:iso1}
\beta\colon\FM{\ke_1}\rest{\cat{C}_l}\isomor\FM{\ke_2}\rest{\cat{C}_l},
\end{equation}
for some integer $l$.

The following easy lemma shows that we can be more precise about the
Fourier--Mukai kernels above.

\begin{lem}\label{lem:boundcoh}
Under the above assumptions, $\ke_i\in\Db(X_1\times X_2)$, for $i=1,2$. Conversely, any $\ke\in\Db(X_1\times X_2)$ yields a Fourier--Mukai functor $\FM{\ke}\colon\Dp(X_1)\to\Db(X_2)$.
\end{lem}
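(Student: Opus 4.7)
Plan: The statement contains two separate claims, a forward direction and a converse, which I would prove independently.

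For the converse direction, I take $\ke\in\Db(X_1\times X_2)$ and an arbitrary $\s{F}\in\Dp(X_1)$. Since perfectness is preserved under pullback along any morphism, $p_1^*\s{F}$ is perfect on $X_1\times X_2$, and hence $\ke\lotimes p_1^*\s{F}$ lies in $\Db(X_1\times X_2)$ (a bounded coherent complex tensored with a perfect one remains bounded coherent). Projectivity of $X_1$ makes $p_2$ proper of finite cohomological dimension, so $\rd(p_2)_*$ sends $\Db(X_1\times X_2)$ into $\Db(X_2)$. This gives $\FM{\ke}(\s{F})\in\Db(X_2)$, as required.

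For the forward direction (note that only the assumption $\FM{\ke_i}\colon\Dp(X_1)\to\Db(X_2)$ is used --- the isomorphism $\beta$ plays no role here), I would test $\ke_i$ against the ample line bundles $\ko_{X_1}(mH_1)\in\cat{C}_l$ for $m>l$. By hypothesis $\rd(p_2)_*(\ke_i\otimes p_1^*\ko(mH_1))$ lies in $\Db(X_2)$, and I would analyze it via the hyper-pushforward spectral sequence
\[
E_2^{p,q}=R^p(p_2)_*\bigl(\s{H}^q(\ke_i)\otimes p_1^*\ko(mH_1)\bigr)\Longrightarrow\s{H}^{p+q}\bigl(\FM{\ke_i}(\ko(mH_1))\bigr).
\]
Serre vanishing on coherent subsheaves of $\s{H}^q(\ke_i)$ shows that for $m\gg0$ only the $p=0$ row survives, so the abutment recovers $(p_2)_*\bigl(\s{H}^q(\ke_i)\otimes p_1^*\ko(mH_1)\bigr)$. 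Boundedness and coherence of the abutment then translate into boundedness (in $q$) and coherence of the sheaves $\s{H}^q(\ke_i)$.

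The main obstacle is that a priori $\s{H}^q(\ke_i)$ is only quasi-coherent, whereas Serre vanishing and coherence-preservation of twisted pushforward along $p_2$ are naturally stated for coherent sheaves. I would handle this by writing each $\s{H}^q(\ke_i)$ as a filtered colimit of its coherent subsheaves and exploiting that both $\rd(p_2)_*$ and tensor with the line bundle $p_1^*\ko(mH_1)$ commute with filtered colimits in our quasi-compact, quasi-separated setting. Noetherianness of $X_2$, which forbids an increasing chain of distinct coherent subsheaves from sitting inside a single bounded coherent complex, then forces each $\s{H}^q(\ke_i)$ to be coherent; the same colimit/boundedness argument applied to $q$ forces vanishing for all but finitely many values of $q$, and hence $\ke_i\in\Db(X_1\times X_2)$.
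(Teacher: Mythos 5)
Your converse direction is correct and is exactly what the paper dismisses as ``clear''. The forward direction, however, contains a genuine circularity. The degeneration of your spectral sequence onto the $p=0$ row rests on Serre vanishing, $R^p(p_2)_*(\s{H}^q(\ke_i)\otimes p_1^*\ko_{X_1}(mH_1))=0$ for $p>0$, and this is available only for \emph{coherent} sheaves and only for $m$ large depending on the sheaf. At that point of the argument you know neither that the $\s{H}^q(\ke_i)$ are coherent nor that only finitely many of them are nonzero --- and these are precisely the two facts you are trying to prove. For a genuinely non-coherent quasi-coherent sheaf the required vanishing can fail for \emph{every} $m$: already for $X_2$ a point, the sheaf $\bigoplus_{k\ge 0}\ko_{\PP^n}(-k)$ has nonzero $H^n$ after any fixed twist. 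So the claim that ``for $m\gg0$ only the $p=0$ row survives'' cannot be established before coherence and boundedness are known; and even granting coherence of each $\s{H}^q$, a single $m$ working for all $q$ simultaneously requires the number of nonzero $q$'s to be finite. Your proposed patch does not break the circle: commuting $\rd(p_2)_*$ with the colimit gives $R^p(p_2)_*(\s{H}^q\otimes\ko(m))=\varinjlim_\alpha R^p(p_2)_*(\s{F}_\alpha\otimes\ko(m))$, where each term vanishes only for $m\gg0$ depending on $\alpha$, so the colimit need not vanish for any fixed $m$; likewise the ``increasing chain of coherent subsheaves must stabilize'' step needs $(p_2)_*(\farg\otimes\ko(m))$ to detect strict inclusions, which again holds only for $m\gg0$ depending on the subsheaf. (A secondary issue is the convergence of the hypercohomology spectral sequence while $\ke_i$ is not yet known to be bounded; finite cohomological dimension of $p_2$ would handle that, but it deserves a word.)

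The paper sidesteps all of this by using Rouquier's intrinsic characterization of $\Db(X_1\times X_2)$ inside $\D(\Qcoh(X_1\times X_2))$: an object $\ke$ is bounded coherent if and only if $\bigoplus_j\Hom(\kf,\sh[j]{\ke})$ is finite dimensional for every perfect $\kf$, and by the Bondal--Van den Bergh generation results it suffices to test this on the single compact generator $G_1\boxtimes G_2$, where adjunction turns the question into the finiteness of $\bigoplus_j\Hom(G_2,\sh[j]{\FM{\ke_i}(G_1\dual)})$, guaranteed by the hypothesis $\FM{\ke_i}(G_1\dual)\in\Db(X_2)$. If you wish to keep a spectral-sequence proof, you would first need a separate argument establishing coherence and boundedness of the cohomology sheaves (for instance a truncation induction from the extremal nonvanishing cohomology, using only edge maps), which is considerably more delicate than the sketch given.
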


\begin{proof}
The second part of the statement is clear. For the first one, we can apply the argument in \cite[Cor.\ 9.13 (4)]{LO} where the assumption that $\FM{\ke_i}$ is fully faithful is not used.

For the convenience of the reader, we provide a different easy argument.
Indeed, due to \cite[Lemma 7.47]{R}, $\ke_i\in\Db(X_1\times X_2)$ if and only
if, for all $\kf\in\Dp(X_1\times X_2)$, we have
\[
\dim\bigoplus_j\Hom(\kf,\sh[j]{\ke_i})<\infty.
\]
Let $G_i$ be a compact generator of $\D(\Qcoh(X_i))$, for $i=1,2$. By
\cite[Lemma 3.4.1]{BB}, $G_1\boxtimes G_2$ is a compact generator of
$\D(\Qcoh(X_1\times X_2))$ (see \cite{BB,R} for the definition of compact generator). As $\bigoplus_j\Hom(G_1\boxtimes
G_2,\sh[j]{\ke_i})\iso\bigoplus_j\Hom(G_2,\sh[j]{\FM{\ke_i}(G_1\dual)})$ is
finite dimensional because $\FM{\ke_i}(G_1\dual)\in\Db(X_2)$, we can
conclude using the fact that $G_1\boxtimes G_2$ classically generates
$\Dp(X_1\times X_2)$ (see, for example, \cite[Thm.\ 2.1.2]{BB}).
\end{proof}

The first step in the proof of of Theorem \ref{thm:main2} is the following.

\begin{lem}\label{lem:sheaves}
If $\ke_1,\ke_2\in\Coh(X_1\times X_2)$, then $\ke_1\iso\ke_2$.
\end{lem}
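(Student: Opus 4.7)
The plan is to reconstruct each coherent sheaf $\ke_i$ from the values of $\FM{\ke_i}$ on high powers of $\ko_{X_1}(H_1)$ via a relative Serre/Proj correspondence, and then read off the isomorphism $\ke_1\iso\ke_2$ from the data of $\beta$. First I would use that $p_1^*H_1$ is $p_2$-relatively ample on $X_1\times X_2$: by relative Serre vanishing applied to the coherent sheaf $\ke_i$, there exists $l'\geq l$ such that for every $m>l'$ the complex
\[
M_i^{(m)}:=\FM{\ke_i}(\ko_{X_1}(mH_1))=\rd(p_2)_*\bigl(\ke_i\otimes p_1^*\ko_{X_1}(mH_1)\bigr)
\]
is concentrated in degree zero and is a coherent sheaf on $X_2$.

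Next, I would upgrade the individual isomorphisms $\beta(\ko_{X_1}(mH_1))\colon M_1^{(m)}\isomor M_2^{(m)}$ to an isomorphism of graded modules. For any $s\in H^0(X_1,\ko_{X_1}(kH_1))$, multiplication by $s$ furnishes a morphism in $\cat{C}_{l'}$ whose image under $\FM{\ke_i}$ is identified, via the projection formula, with the relative multiplication-by-$p_1^*s$ map $M_i^{(m)}\to M_i^{(m+k)}$. Varying $m,k,s$, this endows
\[
M_i:=\bigoplus_{m>l'}M_i^{(m)}
\]
with the structure of a graded module over the sheaf of graded $\ko_{X_2}$-algebras $\s{R}:=\bigoplus_{k\geq 0}\ko_{X_2}\otimes_{\K}H^0(X_1,\ko_{X_1}(kH_1))$, and the naturality of $\beta$ with respect to all such morphisms $s$ forces the $\beta(\ko_{X_1}(mH_1))$ to assemble into an isomorphism $M_1\isomor M_2$ of graded $\s{R}$-modules.

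To conclude, I would invoke the relative Serre/Proj correspondence: under the identification $X_1\times X_2\iso\mathrm{Proj}_{X_2}(\s{R})$, obtained by base change from $X_1=\mathrm{Proj}\bigl(\bigoplus_k H^0(X_1,\ko_{X_1}(kH_1))\bigr)$, any coherent sheaf $\ke$ on $X_1\times X_2$ is recovered (up to isomorphism) as the sheaf associated to the graded $\s{R}$-module $\bigoplus_{m>l'}(p_2)_*(\ke\otimes p_1^*\ko_{X_1}(mH_1))$. Applied to $\ke_1$ and $\ke_2$, this yields $\ke_1\iso\ke_2$ from the previously constructed isomorphism $M_1\iso M_2$.

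The main obstacle I expect is Step 2: verifying that $\FM{\ke_i}$ really sends multiplication by a section of $\ko_{X_1}(kH_1)$ to the corresponding relative multiplication map on pushforwards, so that naturality of $\beta$ produces an isomorphism of $\s{R}$-modules and not merely of the separate graded pieces. This is essentially the naturality of the projection formula, but requires careful bookkeeping. The relative vanishing in Step 1 and the relative Proj reconstruction in Step 3 are standard.
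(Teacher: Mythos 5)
Your proposal is correct and follows essentially the same route as the paper: the paper identifies $\ke_i$ with the sheaf associated to the graded module $\bigoplus_m(p_2)_*(\ke_i\otimes p_1^*\so_{X_1}(mH_1))$ via the relative Serre/Proj correspondence (EGA II, Thm.\ 3.4.4), notes that the underived pushforward agrees functorially with $\FM{\ke_i}(\so_{X_1}(mH_1))$ for $m\gg0$, and concludes that $\beta$ gives an isomorphism of graded modules in high degrees, hence of the associated sheaves. The points you flag as needing care (vanishing of higher direct images for $m\gg0$ and compatibility of $\beta$ with the multiplication maps) are exactly what the paper compresses into the phrase ``functorial isomorphisms.''
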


\begin{proof}
By \cite[Thm.\ 3.4.4]{EGA2}, for $i=1,2$, there is an isomorphism
between $\ke_i$ and the sheaf associated to
$M_i:=\bigoplus_{m\in\ZZ}(p_2)_*(\ke_i\otimes p_1^*\so_{X_1}(mH_1))$,
where $(p_2)_*$ is not derived. Since for $m\gg 0$ there are
functorial isomorphisms
\[
(p_2)_*(\ke_i\otimes p_1^*\so_{X_1}(mH_1))\iso
\FM{\ke_i}(\so_{X_1}(mH_1)),
\]
by \eqref{eqn:iso1} the graded modules $M_1$ and $M_2$ are isomorphic
in sufficiently high degrees. Hence, taking the associated sheaves, we
get $\ke_1\iso\ke_2$.
\end{proof}

If the Fourier--Mukai kernels are not sheaves, we have the following
result concerning their cohomologies. Notice that due to the weaker assumptions on the functors $\FM{\ke_1}$ and $\FM{\ke_2}$ in \eqref{eqn:iso1}, this may be seen as a stronger version of Theorem \ref{thm:main2}.

\begin{prop}\label{prop:isomcohom1}
For any $j\in\ZZ$, we have isomorphisms $H^j(\ke_1)\iso H^j(\ke_2)$ in
$\Coh(X_1\times X_2)$.
\end{prop}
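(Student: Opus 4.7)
My plan is to reduce the problem to the sheaf case (Lemma \ref{lem:sheaves}) by applying the graded-module reconstruction argument separately to each cohomology sheaf $H^j(\ke_i)$. The key technical idea is to choose the twist $\so_{X_1}(mH_1)$ so large that $\FM{\ke_i}$ evaluated on it reads off each cohomology sheaf of $\ke_i$ directly, without mixing them via higher direct images.

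First I would invoke relative Serre vanishing: since $p_1^*\so_{X_1}(H_1)$ is $p_2$-relatively ample on $X_1\times X_2$ and each $\ke_i\in\Db(X_1\times X_2)$ has only finitely many nonzero cohomology sheaves (by Lemma \ref{lem:boundcoh}), there exists an integer $m_0>l$ such that for all $m\geq m_0$, $j\in\ZZ$, $p>0$ and $i=1,2$,
\[
R^p(p_2)_*\bigl(H^j(\ke_i)\otimes p_1^*\so_{X_1}(mH_1)\bigr)=0.
\]
The hypercohomology spectral sequence of $\rd(p_2)_*$ applied to $\ke_i\otimes p_1^*\so_{X_1}(mH_1)$ then degenerates at $E_2$, yielding functorial isomorphisms
\[
H^j\bigl(\FM{\ke_i}(\so_{X_1}(mH_1))\bigr)\iso (p_2)_*\bigl(H^j(\ke_i)\otimes p_1^*\so_{X_1}(mH_1)\bigr)
\]
for every $j\in\ZZ$, every $m\geq m_0$ and $i=1,2$.

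Next, I would apply $H^j$ to the isomorphism $\beta(\so_{X_1}(mH_1))$ and use the naturality of $\beta$ with respect to morphisms $\so_{X_1}(mH_1)\to\so_{X_1}(m'H_1)$ (i.e., multiplication by global sections of $\so_{X_1}((m'-m)H_1)$) to obtain, for each $j$, an isomorphism of graded modules in degrees $\geq m_0$ over $R:=\bigoplus_{m\geq 0}H^0(X_1,\so_{X_1}(mH_1))$:
\[
\bigoplus_{m\geq m_0}(p_2)_*\bigl(H^j(\ke_1)\otimes p_1^*\so_{X_1}(mH_1)\bigr)\iso\bigoplus_{m\geq m_0}(p_2)_*\bigl(H^j(\ke_2)\otimes p_1^*\so_{X_1}(mH_1)\bigr).
\]
As in the proof of Lemma \ref{lem:sheaves}, invoking \cite[Thm.\ 3.4.4]{EGA2} would then recover $H^j(\ke_1)\iso H^j(\ke_2)$ as the coherent sheaves on $X_1\times X_2$ associated to these graded modules via the relative Proj presentation of $X_1\times X_2$ over $X_2$.

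The main subtlety is ensuring that the isomorphism of graded modules genuinely intertwines the $R$-actions; this follows from the naturality of $\beta$ combined with the functoriality of the spectral sequence degeneration, applied uniformly in $m$ for $m\geq m_0$. Once these compatibilities are in place, the argument proceeds in complete parallel to the sheaf case of Lemma \ref{lem:sheaves}, with the spectral-sequence step acting as the bridge between the bounded derived category and the abelian categories where the EGA reconstruction applies.
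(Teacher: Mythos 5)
Your argument is correct, but it takes a genuinely different route from the paper's. The paper first shows that $H^j(\ke_1)$ and $H^j(\ke_2)$ vanish for the same $j$, and then runs an induction on the number of non-trivial cohomology sheaves: it peels off the top cohomology via the truncation triangles $\ke'_i\to\ke_i\to\sh[-n]{H^n(\ke_i)}$, evaluates on $\ko_{X_1}(mH_1)$ for $m\gg0$, uses a $\Hom$-vanishing to complete the resulting morphism of triangles uniquely, and thereby produces isomorphisms $\FM{\ke'_1}\iso\FM{\ke'_2}$ and $\FM{H^n(\ke_1)}\iso\FM{H^n(\ke_2)}$ on a smaller $\cat{C}_{l'}$, to which Lemma \ref{lem:sheaves} and the inductive hypothesis apply. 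You instead treat all cohomology degrees simultaneously: relative Serre vanishing kills the higher direct images of each $H^j(\ke_i)\otimes p_1^*\so_{X_1}(mH_1)$ for $m\gg0$, the hypercohomology spectral sequence for $\rd(p_2)_*$ degenerates, and $H^j(\FM{\ke_i}(\so_{X_1}(mH_1)))$ computes exactly the degree-$m$ piece of the graded module attached to $H^j(\ke_i)$; naturality of $\beta$ and functoriality of the edge isomorphisms then give the graded-module isomorphism needed for the EGA reconstruction. In effect you re-run the proof of Lemma \ref{lem:sheaves} once for each cohomology sheaf rather than reducing to it by d\'evissage. Both proofs rest on the same two pillars (vanishing of higher direct images for large twists and EGA II 3.4.4); your version is more direct and dispenses with the induction and the uniqueness-of-filler step, while the paper's version yields slightly more along the way, namely isomorphisms of the restricted Fourier--Mukai functors of the truncated kernels. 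The one point you should spell out fully is the compatibility of the edge isomorphism $H^j(\rd(p_2)_*(-))\iso(p_2)_*(H^j(-))$ with the maps induced by sections of $\so_{X_1}(kH_1)$, but, as you note, this is standard functoriality of the spectral sequence.
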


\begin{proof}
We first prove that, given $j\in\ZZ$, we have $H^j(\ke_1)=0$ if and only if $H^j(\ke_2)=0$. Indeed, observe that $H^j(\ke_i)=0$ if and only if
$\Hom(\ko_{X_1}(mH_1)\boxtimes\ko_{X_2}(mH_2),\sh[j]{\ke_i})=0$ for $m\ll
0$. But
\begin{multline*}
\Hom(\ko_{X_1}(mH_1)\boxtimes\ko_{X_2}(mH_2),\sh[j]{\ke_1})\iso
\Hom(\ko_{X_2}(mH_2),\sh[j]{\FM{\ke_1}(\ko_{X_1}(-mH_1))})\\
\iso\Hom(\ko_{X_2}(mH_2),\sh[j]{\FM{\ke_2}(\ko_{X_1}(-mH_1))})
\iso\Hom(\ko_{X_1}(mH_1)\boxtimes\ko_{X_2}(mH_2),\sh[j]{\ke_2}).
\end{multline*}

We are now ready to prove the statement by induction on the number of
non-trivial cohomologies. If $\ke_1$ and $\ke_2$ are the shift of a sheaf, we can just apply Lemma \ref{lem:sheaves}. Thus assume that $\ke_i$ has at least two non-trivial cohomologies and that the last non-trivial
one is in degree $n$ while the first non-trivial one is in degree $n'<n$. In particular, we
have distinguished triangles
\begin{equation}\label{eqn:triacoh1}
\begin{array}{c}
\ke'_1\longrightarrow\ke_1\longrightarrow\sh[-n]{H^n(\ke_1)} \\
\ke'_2\longrightarrow\ke_2\longrightarrow\sh[-n]{H^n(\ke_2)},
\end{array}
\end{equation}
where $\ke'_1$ and $\ke'_2$ have cohomologies concentrated in the
interval $[n',n-1]$ which is strictly smaller than the one of $\ke_1$ and $\ke_2$.

Now observe that if $\ke\in\Db(X_1\times X_2)$ is such that $H^j(\ke)=0$ if $j\not\in[a,b]$, then we have $H^j(\FM{\ke}(\ko_{X_1}(mH_1)))=0$ if $j\not\in[a,b]$ and $m\gg 0$. Indeed,
\[
0=\Hom(\ko_{X_1}(-mH_1)\boxtimes\ko_{X_2}(-m'H_2),\sh[j]{\ke})\iso\Hom(\ko_{X_2}(-m'H_2),\sh[j]{\FM{\ke}(\ko_{X_1}(mH_1))}),
\]
if $m'\gg0$ and under the above assumptions on $m$ and $j$.

For $m\gg0$ from \eqref{eqn:triacoh1} we get the diagram
\begin{equation}\label{eqn:triacoh2}
\xymatrix{
\FM{\ke'_1}(\ko_{X_1}(mH_1))\ar[r]&\FM{\ke_1}(\ko_{X_1}(mH_1))\ar[r]\ar[d]^{\beta_m}& \sh[-n]{\FM{H^n(\ke_1)}(\ko_{X_1}(mH_1))} \\
\FM{\ke'_2}(\ko_{X_1}(mH_1))\ar[r]&\FM{\ke_2}(\ko_{X_1}(mH_1))\ar[r]&\sh[-n]{\FM{H^n(\ke_2)}(\ko_{X_1}(mH_1))},
}
\end{equation}
where the two rows are distinguished triangles and $\beta_m:=\beta(\ko_{X_1}(mH_1))$ is the
isomorphism induced by \eqref{eqn:iso1}.

Using the remark above, we get that $\FM{\ke'_1}(\ko_{X_1}(mH_1))$ has non-trivial cohomologies concentrated in degrees $[n',n-1]$ while $\sh[-n]{\FM{H^n(\ke_2)}(\ko_{X_1}(mH_1))}$ is a sheaf in degree $n$. Hence
\[
\Hom(\sh[k]{\FM{\ke'_1}(\ko_{X_1}(mH_1))},\sh[-n]{\FM{H^n(\ke_2)}(\ko_{X_1}(mH_1))})=0,
\]
for $m\gg 0$ and $k=0,1$. It follows that \eqref{eqn:triacoh2} can be completed to a
commutative diagram in a unique way. Thus, for some $l'>l$, we get natural
transformations
$\alpha:\FM{\ke'_1}\rest{\cat{C}_{l'}}\isomor\FM{\ke'_2}\rest{\cat{C}_{l'}}$ and
$\gamma:\FM{H^n(\ke_1)}\rest{\cat{C}_{l'}}\isomor\FM{H^n(\ke_2)}\rest{\cat{C}_{l'}}$,
which are easily seen to be isomorphisms (applying the same argument
to $\beta_m^{-1}$). By Lemma \ref{lem:sheaves}, we have
$H^n(\ke_1)\iso H^n(\ke_2)$ and, by induction, $H^j(\ke'_1)\iso
H^j(\ke'_2)$, for all $j\in\ZZ$. This is enough, as $H^j(\ke_i)\iso
H^j(\ke'_i)$, for $j<n$.
\end{proof}

Denoting by $K(X_1\times X_2)$ the Grothendieck group of the abelian
category $\Coh(X_1\times X_2)$, we clearly get the following result.

\begin{cor}\label{cor:Groth}
Let $X_1$ and $X_2$ be projective schemes. Consider two isomorphic
Fourier--Mukai functors
\[
\FM{\ke_1}\iso\FM{\ke_2}:\Dp(X_1)\longrightarrow\Db(X_2)
\]
Then $H^j(\ke_1)\iso H^j(\ke_2)$ for all $j\in\ZZ$. In particular,
$[\ke_1]=[\ke_2]$ in $K(X_1\times X_2)$.
\end{cor}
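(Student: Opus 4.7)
The plan is to derive the corollary as an immediate consequence of Proposition \ref{prop:isomcohom1}, since the hypotheses there are strictly weaker than those of the corollary.

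First I would observe that we are in a setting where Proposition \ref{prop:isomcohom1} applies. Indeed, by Lemma \ref{lem:boundcoh} we already know that $\ke_1,\ke_2\in\Db(X_1\times X_2)$, and the given isomorphism of functors
\[
\FM{\ke_1}\iso\FM{\ke_2}\colon\Dp(X_1)\longrightarrow\Db(X_2)
\]
restricts, for any choice of integer $l$ (for instance $l=0$), to an isomorphism
\[
\beta\colon\FM{\ke_1}\rest{\cat{C}_l}\isomor\FM{\ke_2}\rest{\cat{C}_l},
\]
which is exactly the hypothesis \eqref{eqn:iso1}. Applying Proposition \ref{prop:isomcohom1} then yields $H^j(\ke_1)\iso H^j(\ke_2)$ in $\Coh(X_1\times X_2)$ for every $j\in\ZZ$, which is the first assertion of the corollary.

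For the second assertion, I would invoke the standard fact that for any object $\ke\in\Db(X_1\times X_2)$ one has
\[
[\ke]=\sum_{j\in\ZZ}(-1)^j[H^j(\ke)]\quad\text{in}\quad K(X_1\times X_2),
\]
where only finitely many terms are nonzero since $\ke$ is bounded. This identity is proved by an easy induction on the cohomological amplitude of $\ke$, using the truncation distinguished triangles $\tau^{\le n-1}\ke\to\ke\to\sh[-n]{H^n(\ke)}$ (in fact this is exactly the induction set up in the proof of Proposition \ref{prop:isomcohom1}). Since $H^j(\ke_1)\iso H^j(\ke_2)$ implies $[H^j(\ke_1)]=[H^j(\ke_2)]$ in $K(X_1\times X_2)$, summing with the appropriate signs gives $[\ke_1]=[\ke_2]$.

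There is no real obstacle here: all the substantive work has been carried out in Proposition \ref{prop:isomcohom1} and in the preceding Lemmas \ref{lem:boundcoh} and \ref{lem:sheaves}. The corollary is a direct specialization plus a routine passage to the Grothendieck group.
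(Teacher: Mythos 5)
Your proposal is correct and follows exactly the route the paper intends: the corollary is stated as an immediate consequence of Proposition \ref{prop:isomcohom1} (whose hypothesis \eqref{eqn:iso1} is weaker than a global isomorphism of functors), with the Grothendieck group statement following from the standard identity $[\ke]=\sum_j(-1)^j[H^j(\ke)]$. Nothing is missing.
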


Notice that, if $\K=\CC$, it is well-known and very easy to see, under
the assumptions of Corollary \ref{cor:Groth}, that
$\mathrm{ch}(\ke_1)=\mathrm{ch}(\ke_2)\in H^*(X_1\times
X_2,\QQ)$. Indeed $\FM{\ke_i}$ induces a correspondence between
$H^*(X_1,\QQ)$ and $H^*(X_2,\QQ)$ given by the object
$\mathrm{ch}(\ke_i)\cdot\sqrt{\mathrm{td}(X_1\times X_2)}$ (see
\cite{Or1}). The K\"unneth decomposition for the cohomology of the
product yields then $\mathrm{ch}(\ke_1)=\mathrm{ch}(\ke_2)$.


\bigskip

{\small\noindent {\bf Acknowledgements.} The authors would like to thank Emanuele Macr\`i for interesting conversations. We are also grateful to Arend Bayer, Daniel Huybrechts, Dmitri Orlov and Pawel Sosna for comments on an early version of this paper. Part of this article was written while P.S.\ was visiting the Department of Mathematics of the University of Utah and the Institut Henri Poincar\'e in Paris whose warm hospitality is gratefully acknowledged. It is a pleasure to thank these institutions and the Istituto Nazionale di Alta Matematica for financial support.}


\end{document}